\newcounter{tablerow}
\newcommand{\rowtag}[2]{%
  \refstepcounter{tablerow}%
  \label{#1}%
  \makebox[2em][r]{(#2)}%
}
\definecolor{myorange}{rgb}{0.9, 0.55, 0.3}
\definecolor{mygreen}{rgb}{0.35, 0.71, 0.0}
\definecolor{mybrown}{rgb}{0.63, 0.32, 0.18}
\theoremstyle{plain} 
\newtheorem{theorem}{Theorem}[section]
\newtheorem{proposition}[theorem]{Proposition}
\newtheorem{lemma}[theorem]{Lemma}
\theoremstyle{definition}
\theoremstyle{remark}
\newtheorem{remark}[theorem]{Remark}
\newtheorem{correction}[theorem]{Correction}
\numberwithin{equation}{section}
\newcounter{rowcntr}[table]
\renewcommand{\therowcntr}{\thetable.\arabic{rowcntr}}
\newcolumntype{N}{>{\refstepcounter{rowcntr}\therowcntr}c}
\newcommand{\tightoverset}[2]{%
  \mathop{#2}\limits^{\vbox to -.5ex{\kern-1.05ex\hbox{$#1$}\vss}}}
\newcommand{\ZZ}{\mathbb{Z}}
\newcommand{\setS}{{\mathcal{S}}}
\renewcommand{\gcd}{{\operatorname{gcd}}}
\newcommand{\MOD}[1]{~(\textup{mod}~#1)}
\renewcommand{\pmod}{\MOD}
\newcommand{\mtil}{\overline{m}}
\keywords{quadratic forms, Apollonian circle packings, half-dimensional sieve}
\subjclass[2010]{Primary: 11N32, 11D09, 11E12, 11N36; Secondary: 52C26}
\thanks{
 Fuchs was supported by NSF DMS-2154624. 
 Hsu was supported by an
AMS-Simons Research Enhancement Grants for PUI Faculty. 
Rickards was supported by NSERC Discovery Grant RGPIN-2025-04068. 
Schindler was supported by NWO 016.Veni.173.016.
Stange was supported by NSF DMS-2401580.}
\title[Primes in shifted quadratic forms: primitivity and congruence]{Primes represented by shifted quadratic forms:  on primitivity and congruence classes}
\author[Fuchs]{Elena Fuchs}
\address{University of California Davis, Davis, California, USA}
\email{efuchs@ucdavis.edu}
\urladdr{https://www.math.ucdavis.edu/~efuchs/}
\author[Hsu]{Catherine Hsu}
\address{Swarthmore College, Swarthmore, Pennsylvania, USA}
\email{chsu2@swarthmore.edu}
\urladdr{https://chsu.domains.swarthmore.edu/}
\author[Rickards]{James Rickards}
\address{Saint Mary's University, Halifax, Nova Scotia, Canada}
\email{james.rickards@smu.ca}
\urladdr{https://jamesrickards-canada.github.io/}
\author[Schindler]{Damaris Schindler}
\address{Göttingen University, Göttingen, Germany}
\email{damaris.schindler@mathematik.uni-goettingen.de}
\urladdr{https://sites.google.com/site/damarishomepage/}
\author[Stange]{Katherine E. Stange}
\address{University of Colorado Boulder, Boulder, Colorado, USA}
\email{kstange@math.colorado.edu}
\urladdr{https://math.katestange.net/}
\date{\today}
\begin{document}

\begin{abstract}
We prove lower bounds of the form $\gg N/(\log N)^{3/2}$ for the number of primes up to $N$ \emph{primitively} represented by a shifted positive definite integral binary quadratic form, and under the additional condition that primes are from an arithmetic progression.  This extends the sieve methods of Iwaniec, who showed such lower bounds without the primitivity and congruence conditions.  Imposing primitivity adds some subtleties to the local criteria for representation of a shifted prime: for example, some shifted quadratic forms of discriminant $5 \pmod{8}$ \emph{do not} primitively represent infinitely many primes. We also provide a careful list of the local conditions under which a genus of an integral binary quadratic form represents an integer, verified by computer, and correcting some minor errors in previous statements.  The motivation for this work is as a tool for the study of prime components in Apollonian circle packings \cite{primecomponents}.
\end{abstract}

\maketitle

\section{Introduction}

We prove the following theorem, which extends work of Iwaniec \cite{Iwaniec72shiftedprimes} on primes represented by shifted binary quadratic forms.  We consider lower bounds for the number of such primes when we impose two simultaneous restrictions: that the representation be \emph{primitive} (sometimes called \emph{proper}); and that the primes fall in a specified congruence class.  

\begin{theorem}
\label{thm:maintheorem}
Let $f=ax^2+bxy+cy^2$ be a primitive integral binary quadratic form with discriminant $D$ not a perfect square, positive definite if $D<0$, and $(a,2D)=1$.  Let $A$ be a non-zero integer and $B\in \mathbb{N}$ with $\gcd(A,B)=1$.  Denote by $b_{f,A}(n)$ the characteristic function for whether $n$ is represented primitively by $Bf(x,y)+A$. Assume that $2\mid AB$ or $D\not\equiv 5 \pmod{8}$. 
Let $\mtil$ be a natural number and $l$ an integer with $\gcd(l,\mtil)=1$ and assume that $\gcd(\mtil,2DB)=1$ and that $\gcd(l-A,\mtil)=1$. 
Then 
\begin{enumerate}
\item\[
\sum_{\substack{p \le N,\\ p \text{ prime}}} b_{f,A}(p)
\gg \ll
\frac{N}{(\log N)^{3/2}}.
\]
\item
\[
\sum_{\substack{p \le N,\\ p \text{ prime}\\p\equiv \ell\bmod{\mtil}}} b_{f,A}(p)
\gg
\frac{N}{(\log N)^{3/2}}.
\]
\end{enumerate}

The notation above is taken from \cite{Iwaniec72shiftedprimes}, where $D=b^2-4ac=\pm 2^{\vartheta_2}p_1^{\vartheta_{p_1}}\cdots p_k^{\vartheta_{p_k}}$ and  $D_p=p^{-\vartheta_p}D$.
\end{theorem}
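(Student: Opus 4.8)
The plan is to realize the count as a half-dimensional sieve in the manner of Iwaniec \cite{Iwaniec72shiftedprimes} and to route both new constraints---primitivity and the congruence $p\equiv\ell\pmod{\mtil}$---through the local densities, leaving the analytic heart of the sieve unchanged. Writing $p=Bf(x,y)+A$, the indicator $b_{f,A}(p)$ detects exactly those primes $p\equiv A\pmod B$ whose shift $m=(p-A)/B$ is \emph{primitively} represented by $f$; thus $p$ runs over primes for which $m$ lies in the thin set of integers represented by $f$ and $Bm+A$ is prime. The output exponent $3/2$ is the product of two densities: integers represented by a fixed form of discriminant $D$ have density $\asymp(\log N)^{-1/2}$ (the half-dimensional, $\kappa=1/2$, phenomenon, governed by the splitting of primes in $\QQ(\sqrt D)$), while primality of $Bm+A$ contributes the familiar $(\log N)^{-1}$. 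I would import Iwaniec's sieve machinery essentially verbatim; its analytic input is the level of distribution of the values $Bf(x,y)+A$ in arithmetic progressions to moduli coprime to $2DB$, which follows from counting lattice points on the associated conics.

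The first new ingredient is \emph{primitivity}. Sorting representations of $m=(p-A)/B$ by their content $e=\gcd(x,y)$ and using $f(ex',ey')=e^2f(x',y')$, the primitive representation number satisfies $r_f^*(m)=\sum_{e^2\mid m}\mu(e)\,r_f(m/e^2)$. Feeding this Möbius inversion into the sieve replaces each local factor by its primitive analogue, namely the proportion of residues $(x,y)\pmod{q^k}$ with $(x,y)\not\equiv(0,0)\pmod q$ for which $q^k\mid Bf(x,y)+A$. The hypothesis $(a,2D)=1$ keeps the reduction of $f$ nondegenerate modulo every odd $q$, so these primitive densities agree with the imprimitive ones except at finitely many primes, and in particular the sieve dimension is still $\kappa=1/2$. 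The one place where primitivity can annihilate the count is $q=2$: when $D\equiv5\pmod8$ and $2\nmid AB$ the primitive local density at $2$ vanishes---this is precisely the obstruction noted in the abstract---and the standing hypothesis $2\mid AB$ or $D\not\equiv5\pmod8$ is imposed exactly to keep this factor positive.

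The second ingredient is the congruence $p\equiv\ell\pmod{\mtil}$. Since $\gcd(\mtil,2DB)=1$, this condition is orthogonal to the form and to the scaling by $B$: it is equivalent to $f(x,y)\equiv B^{-1}(\ell-A)\pmod{\mtil}$, and the coprimality hypotheses $\gcd(\ell,\mtil)=1$ and $\gcd(\ell-A,\mtil)=1$ ensure that the target residue $B^{-1}(\ell-A)$ is a unit modulo every $q\mid\mtil$. Because $f$ is nondegenerate modulo such $q$ (again by $(a,2D)=1$), it represents this unit with a positive proportion of primitive $(x,y)$, so folding $\mtil$ into the sieve level multiplies the singular product by finitely many strictly positive local factors and again leaves $\kappa=1/2$ intact. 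Part (1) is the case $\mtil=1$, and part (2) follows from the same sieve with the enlarged level and the modified singular product; since $\mtil$ is fixed, it may be absorbed with an implied constant depending on $\mtil$.

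The main obstacle is the local analysis feeding the singular product: proving that the primitive local densities are multiplicative, evaluating them at $2$ and at the primes dividing $D$, and showing that their product converges and is \emph{strictly positive} under the stated hypotheses. This is where primitivity genuinely alters the arithmetic, most delicately at $q=2$ and at ramified primes, and where I would lean on the computer-verified table of genus representation conditions advertised in the abstract to certify the relevant $2$-adic and $p$-adic densities. Once positivity and convergence of the singular product are established and the level of distribution is in hand, the half-dimensional sieve returns the lower bound $\gg N/(\log N)^{3/2}$, with implied constant depending on $f$, $A$, $B$, and $\mtil$.
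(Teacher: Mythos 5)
There is a genuine gap, and it is precisely the difficulty this paper exists to overcome: the passage from the genus to the single form $f$. A sieve built from local densities (equivalently, from the table conditions of Theorem~\ref{thm:Iwaniecthm2}) can only detect whether $(p-A)/B$ is primitively represented by \emph{some form in the genus} $R_f$, because only representability by the genus is a local condition; representability by $f$ itself is not. Your proposal treats ``represented by a fixed form of discriminant $D$'' as if it were governed by a singular product of local factors, and never says how to descend from the genus to $f$. Iwaniec's own descent sacrifices primitivity, which is exactly why the extension is not immediate. The paper's route is different: it restricts attention to shifted primes $(p-A)/B$ that are squarefree outside $\{2,3\}$ with bounded powers of $2$ and $3$ --- such numbers, once represented, are automatically \emph{primitively} represented, which is how primitivity is actually secured, rather than through modified local densities --- runs the half-dimensional sieve on representation by the genus via Theorem~\ref{thm:I3} (after an auxiliary squarefree sieve, Lemmas~\ref{lemma:sq} and~\ref{lemma:sigma4}, to discard values with a large square factor), verifies in Lemma~\ref{lemlocal} that admissible local data $(d,L)$ exist under the hypothesis $2\mid AB$ or $D\not\equiv 5\pmod{8}$, and finally invokes a generalization of Bourgain--Fuchs (Theorem~\ref{thm:generalizationBourgainFuchs}) to show that all but $O(N/(\log N)^{3/2+\delta'})$ of these essentially squarefree, genus-represented shifted primes are represented by \emph{every} form in the genus, hence by $f$. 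Without this (or some substitute), your argument proves at best the analogue of Theorem~\ref{thm:maintheorem} with $f$ replaced by its genus.

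Separately, your device for imposing primitivity --- M\"obius inversion over the content, $r_f^*(m)=\sum_{e^2\mid m}\mu(e)\,r_f(m/e^2)$ --- does not combine with a lower-bound sieve. The half-dimensional sieve bounds from below the number of shifted primes whose \emph{indicator} of representability is positive; it does not control the representation numbers $r_f(m)$, and positivity of an alternating sum of representation numbers cannot be extracted termwise from sieve inequalities. The correct local formulation of primitive representability (by the genus) is the content of Theorem~\ref{thm:Iwaniecthm2} and Tables~\ref{table1}--\ref{table2}, which the paper uses directly and whose simultaneous solvability is genuinely delicate (Lemma~\ref{lemlocal} exhibits further exceptional cases, e.g.\ at the prime $3$, beyond the one you found). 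To your credit, you correctly identify the $2$-adic obstruction when $D\equiv 5\pmod 8$ and $2\nmid AB$, and your treatment of the congruence $p\equiv\ell\pmod{\mtil}$ as orthogonal to the moduli dividing $2DB$ matches what the paper does in Sections~\ref{sec:half-dimensional-sieve} and~\ref{sec:dist}.
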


Note that the upper bound is immediate from \cite{Iwaniec72shiftedprimes}, in which Iwaniec shows the desired upper and lower bounds for primes represented (not necessarily primitively) by $Bf(x,y)+A$. 

Iwaniec's celebrated work on primes represented by quadratic polynomials in two variables includes \cite{Iwaniec72a, Iwaniec72shiftedprimes, Iwaniec74quadraticfunctions}.  The last of these is the most general, addressing quadratic polynomials with possible linear factors.  This extends earlier work of Pleasants showing the representation of infinitely many primes \cite{Pleasants}.  In \cite{Iwaniec72shiftedprimes}, Iwaniec introduces the half-dimensional sieve.

More recently, these results have been featured in applications to Apollonian circle packings in \cite{Sarnak}, \cite{BourgainFuchs}, and \cite{bourgain2012representation}, as outlined in Section~\ref{sec:apollonian}.

\subsection{Methods}
\label{sec:methods}

In this paper, we extend the methods of Iwaniec to prove Theorem~\ref{thm:maintheorem}.  The proofs follow the arguments of \cite{Iwaniec72shiftedprimes} quite closely, so that we have chosen to preserve the notation of that paper in order to facilitate reading them side-by-side.

In \cite{Sarnak}, Sarnak remarks that the more general result in \cite{Iwaniec74quadraticfunctions} (and implicitly those in \cite{Iwaniec72shiftedprimes}) can be modified to include the condition of primitive representation.  We found this modification is not entirely straightforward. Indeed, the statement is false for certain choices of $A, B,$ and $D$.

For example, if $D \equiv 5 \mod{8}$ and $2\nmid AB$, then the shifted quadratic form $B(ax^2+bxy+cy^2)+A$ does not primitively represent infinitely many primes. Indeed, if it were to primitively represent infinitely many primes, then $ax^2+bxy+cy^2$ would primitively represent infinitely often even numbers. However, this is not possible by Theorem \ref{thm:Iwaniecthm2}. As a concrete example, $x^2-xy+y^2+1$ does not primitively represent infinitely many primes.

We also make use of methods from \cite{BourgainFuchs}, which enable us to pass from squarefree shifted primes represented by the genus of a quadratic form to those represented by one quadratic form. Note that in \cite{Iwaniec72shiftedprimes}, Iwaniec also has a method for passing from the genus of a quadratic form to the one quadratic form of interest, but in doing so he sacrifices primitivity. Our strategy to focus on (mostly) squarefree shifted primes represented by the genus first requires a careful analysis of local conditions on the various parameters introduced in \cite{Iwaniec72shiftedprimes}. Our application of \cite{BourgainFuchs} is the first example in the literature of such an application, to our knowledge.

\subsection{Local conditions for representation by a genus}

It is well-known that representability by a genus is a local property: one need only verify a condition for each prime $p$ (including the infinite prime).  In most cases, the condition is simply the Kronecker symbol $\left( \frac{D}{p} \right) = 1$.  However, primes dividing the discriminant, and the prime $2$, have more complex conditions.  In \cite{Iwaniec72shiftedprimes}, Iwaniec provides exhaustive tables of these local conditions (see Tables ~\ref{table1} and \ref{table2} in Section~\ref{sec:representability}).  This plays a role in the proof there, and it plays a larger role in our proof here.  We are unaware of any other exhaustive statement in the literature.

In this paper we correct a small notational error in the original tables and verify the statements by computer experiment.

\subsection{Application to Apollonian packings.}\label{sec:apollonian}

The study of Apollonian circle packings has attracted attention in recent years \cite{FuchsBulletin}.  An Apollonian circle packing (Figure~\ref{fig:pack}) gives rise to a collection of curvatures (curvature is the inverse of radius), which can be viewed as the union of the primitively represented values of a collection of translated binary quadratic forms.  Any four mutually tangent circles, or their quadruple of curvatures, is called a \emph{Descartes quadruple}.

\begin{figure}[h]
  \begin{center}
  \includegraphics[width=5in]{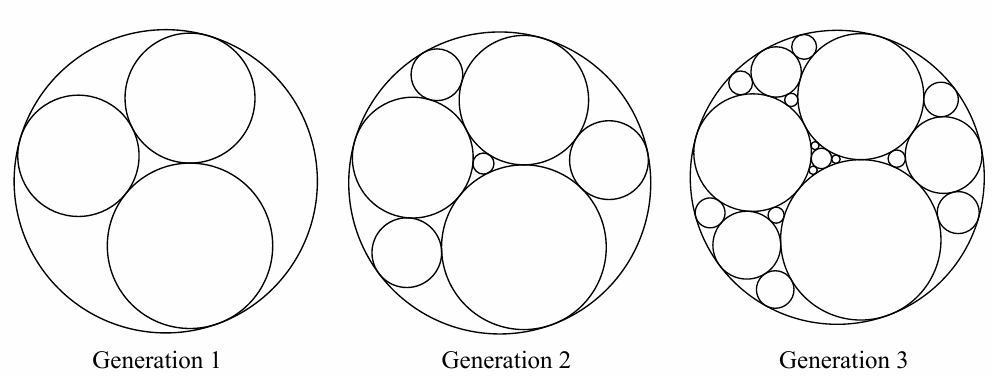}
  \caption{The construction of an Apollonian packing.  Begin with four mutually tangent circles.  For each triple of mutually tangent circles, a result of Apollonius asserts the existence of exactly two which complete the triple to a mutually tangent quadruple.  Add these circles in.  For all newly created mutually tangent triples, add in their completions.  Repeat, ad infinitum.}
  \label{fig:pack}
  \end{center}
\end{figure}

\begin{proposition}[{\cite{Sarnak}}]
\label{prop:QF}
    Let $C_a$ be a circle in an Apollonian circle packing, having curvature $a$ and lying within a Descartes quadruple with curvatures $(a,b,c,d)$.  Let $S$ be the set of circles tangent to $C_a$ within the packing.  Define
   \[
     f_a(x,y) - a = (b+a)x^2+(a+b+d-c)xy+(d+a)y^2 - a.
    \]
    Then, the multiset of curvatures of the circles in $S$ is exactly the multiset of values $f_a(x,y)-a$ for $(x,y)$ coprime integers.
\end{proposition}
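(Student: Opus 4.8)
The plan is to parametrize the circles tangent to $C_a$ by the Stern--Brocot (Farey) tree and to reduce the curvature identity to the parallelogram law for the quadratic form $f_a$. First I would record the local structure of the packing around $C_a$. The circles tangent to $C_a$, listed by their points of tangency along $\partial C_a$, form a chain in which two of them are mutually tangent precisely when their primitive ``slope'' vectors are Farey neighbours (unimodular), and between two mutually tangent members $C_1,C_2$ of the chain the two circles completing $\{C_a,C_1,C_2\}$ to a Descartes quadruple are themselves tangent to $C_a$. The circles $C_b,C_c,C_d$ and the second circle $C_{c'}$ tangent to $C_a,C_b,C_d$ are the first members of this chain, and every remaining tangent circle is obtained by iterated Apollonian completion, that is, by repeatedly inserting the circle in the gap between two consecutive tangent circles. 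Cleanest is to establish this combinatorial picture by applying a Möbius transformation carrying $C_a$ to a straight line, which turns $S$ into a strip packing whose tangency graph is exactly the Farey graph.

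Next I would fix the bijection with primitive vectors $(x,y)\in\ZZ^2$ (representing points of $\PP^1(\QQ)$) by the assignment $C_b\leftrightarrow(1,0)$, $C_d\leftrightarrow(0,1)$, $C_c\leftrightarrow(1,-1)$, $C_{c'}\leftrightarrow(1,1)$, together with the mediant rule: the circle filling the gap between the circles labelled $(x_1,y_1)$ and $(x_2,y_2)$ is labelled $(x_1+x_2,y_1+y_2)$. Expanding $f_a(x,y)-a=(b+a)x^2+(a+b+d-c)xy+(d+a)y^2-a$ verifies the base cases directly: one gets $b$, $d$, and $c$ for $(1,0),(0,1),(1,-1)$ respectively, while $f_a(1,1)-a=2(a+b+d)-c$, which is the curvature of $C_{c'}$ by the Descartes circle theorem.

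The heart of the argument is the inductive step. Suppose the circles labelled $(x_1,y_1)$ and $(x_2,y_2)$ are mutually tangent and tangent to $C_a$, with curvatures $f_a(x_i,y_i)-a$. In the Farey graph the unimodular edge $\{(x_1,y_1),(x_2,y_2)\}$ lies in exactly two triangles, whose apexes are $(x_1+x_2,y_1+y_2)$ and $(x_1-x_2,y_1-y_2)$; these label the two circles completing $\{C_a,C_{(x_1,y_1)},C_{(x_2,y_2)}\}$. By the Descartes circle theorem the two completing curvatures sum to $2\big(a+(f_a(x_1,y_1)-a)+(f_a(x_2,y_2)-a)\big)$, so, using the inductive value $f_a(x_1-x_2,y_1-y_2)-a$ for the circle already present, the curvature of the inserted circle is
\[
2f_a(x_1,y_1)+2f_a(x_2,y_2)-f_a(x_1-x_2,y_1-y_2)-a .
\]
The parallelogram law $f_a(u+v)+f_a(u-v)=2f_a(u)+2f_a(v)$ identifies this with $f_a(x_1+x_2,y_1+y_2)-a$, which closes the induction.

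Finally I would assemble the bijection: the induction shows every circle of $S$ carries a primitive label $(x,y)$ with curvature $f_a(x,y)-a$, and the exhaustiveness of the Farey tree (every point of $\PP^1(\QQ)$ occurs) shows every primitive label is realized. Since $f_a$ is even, each circle corresponds to the antipodal pair $\pm(x,y)$ and takes the common value $f_a(x,y)-a$, which yields the asserted multiset identity (with antipodal primitive vectors identified, so that each tangency slope is counted once). I expect the main obstacle to be the geometric and combinatorial input of the first paragraph, namely that the tangency graph of $S$ is exactly the Farey graph and that the Apollonian completion realizes the mediant insertion; the curvature bookkeeping, by contrast, collapses neatly once the parallelogram law is invoked.
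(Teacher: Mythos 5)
Your argument is correct, and the verification checks out: the base cases $f_a(1,0)-a=b$, $f_a(0,1)-a=d$, $f_a(1,-1)-a=c$, $f_a(1,1)-a=2(a+b+d)-c$ are right, and the inductive step combining the Descartes relation $k_4+k_4'=2(k_1+k_2+k_3)$ with the parallelogram law $f_a(u+v)+f_a(u-v)=2f_a(u)+2f_a(v)$ does exactly what you claim. Note, however, that the paper offers no proof of this proposition at all --- it is quoted from Sarnak's letter to Lagarias --- so there is nothing internal to compare against; your write-up is essentially the standard argument from that source (invert at a tangency point of $C_a$ so that $S$ becomes a Ford-circle-type configuration along a line, identify the tangency graph with the Farey graph on $\PP^1(\QQ)$, and propagate curvatures by Descartes completion). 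Two points you correctly flag but should not gloss over if this were written out in full: (i) the combinatorial input --- that every circle of $S$ beyond the initial four arises by completing a triple containing $C_a$, and that the two completions of the edge $\{u,v\}$ carry the labels $u+v$ and $u-v$ with $u-v$ always the earlier-constructed one --- is where the real geometric content lives, and it does need the inversion-to-strip-packing argument (or equivalently the action of the stabilizer of $C_a$ in the Apollonian group) rather than just assertion; and (ii) as stated over all coprime pairs $(x,y)$ the multiset of values double-counts each circle via $\pm(x,y)$, so the identity holds with primitive vectors taken up to sign, which is the intended reading of the proposition.
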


The form $f_a(x,y)$ has discriminant $-4a^2$, and it is known that all primitive integral binary quadratic forms of discriminant $-4a^2$ arise from some Apollonian circle packing in this way \cite{GLMWY}.

In \cite{primecomponents}, the authors consider the question of \emph{prime components} in Apollonian circle packings, that is, maximal tangency-connected collections of circles of prime curvature.  This requires control over the prime translated primitively represented values of such forms, in congruence classes.  The version of Theorem~\ref{thm:maintheorem} that is needed in this application is the following.

\begin{theorem}
\label{thm:maintheorempaper2}
Let $f_a$ be a primitive positive definite integral binary quadratic form of the form given in Proposition~\ref{prop:QF}, in particular, having discriminant $D = -4 a^2$. Assume also that $a$ is odd.  Write $b_{f_a,a}(n)$ for the characteristic function for whether $n$ is represented primitively by $f_a(x,y)-a$.  
\begin{enumerate}
\item  Then \[
\sum_{\substack{p \le N,\\ p \text{ prime}}} b_{f_a,a}(p)
\gg \ll
\frac{N}{(\log N)^{3/2}}.
\]

\item Let $\ell$ and $m$ be coprime integers satisfying $(m,2D)=1$ and $(\ell + a, m) = 1$.  Then
\[
\sum_{\substack{p \le N,\\ p \text{ prime}\\p\equiv \ell\bmod{m}}} b_{f_a,a}(p)
\gg_m 
\frac{N}{(\log N)^{3/2}},
\]
where the implicit constant may depend on $m$. 
\end{enumerate}
\end{theorem}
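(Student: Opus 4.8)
The plan is to deduce Theorem~\ref{thm:maintheorempaper2} directly from Theorem~\ref{thm:maintheorem} by specializing the shift parameters to $B = 1$ and $A = -a$, so that the shifted form $Bf(x,y)+A$ of Theorem~\ref{thm:maintheorem} becomes $f_a(x,y)-a$. The only genuine point of care is a notational and arithmetic clash: Theorem~\ref{thm:maintheorem} requires the \emph{leading coefficient} of the form to be coprime to $2D$, whereas the form $f_a$ of Proposition~\ref{prop:QF} has leading coefficient $b+a$, which need not be coprime to $2D = 8a^2$. I would first remove this obstruction by replacing $f_a$ with a properly equivalent form.

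First I would record two standard facts about primitive representation. Since $\SL_2(\ZZ)$ acts bijectively on primitive (coprime-coordinate) vectors, two properly equivalent integral binary quadratic forms primitively represent exactly the same set of integers; moreover, a primitive form primitively represents infinitely many integers coprime to any fixed modulus, and hence is properly equivalent to a form whose leading coefficient is coprime to any prescribed integer. Applying this to $f_a$ with modulus $2D = 8a^2$, I obtain a primitive form $f$, properly equivalent to $f_a$, whose leading coefficient $\alpha$ satisfies $(\alpha, 2D) = 1$. Because proper equivalence preserves the set of primitively represented values and the constant shift $-a$ is unaffected, $f(x,y)-a$ and $f_a(x,y)-a$ primitively represent the same integers, so it suffices to prove both statements for $f$ in place of $f_a$. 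This replacement changes neither the discriminant $D = -4a^2$ nor the shift data $A = -a$, $B = 1$, and does not interact with the congruence parameters, so it is harmless for part (2).

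It then remains to verify the hypotheses of Theorem~\ref{thm:maintheorem}. The form $f$ is primitive and positive definite, and $D = -4a^2$ is negative, hence not a perfect square. We have $A = -a \neq 0$ (as $a$ is odd), $(\alpha, 2D) = 1$, and $\gcd(A,B) = \gcd(-a,1) = 1$. Since $a$ is odd, $AB = -a$ is odd, so we are in the case $2 \nmid AB$ and must check $D \not\equiv 5 \pmod 8$; indeed $a^2 \equiv 1 \pmod 8$ gives $D = -4a^2 \equiv 4 \pmod 8$. This yields part (1). For part (2) I would take $\mtil = m$ and $l = \ell$; the hypotheses $(\ell, m) = 1$, $(m, 2D) = 1$, and $(\ell + a, m) = 1$ translate respectively into $\gcd(l,\mtil) = 1$, $\gcd(\mtil, 2DB) = \gcd(m, 2D) = 1$, and $\gcd(l - A, \mtil) = \gcd(\ell + a, m) = 1$, which are exactly the congruence hypotheses of Theorem~\ref{thm:maintheorem}(2). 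Applying that theorem gives the claimed lower bound, with the implicit constant depending on $m$ through $\mtil$.

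The argument carries essentially no analytic content beyond Theorem~\ref{thm:maintheorem}; the one place demanding attention is the reduction in the second paragraph, since \emph{primitivity} of representation (rather than mere representation) must be preserved when passing to the coprime-leading-coefficient representative, and one must confirm that the parity/discriminant hypothesis $D \not\equiv 5 \pmod 8$ is automatic here rather than an extra constraint.
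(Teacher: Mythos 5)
Your proposal is correct and matches the paper's (implicit) route: Theorem~\ref{thm:maintheorempaper2} is intended as a direct specialization of Theorem~\ref{thm:maintheorem} with $B=1$, $A=-a$, and your verification that $D=-4a^2\equiv 4\pmod 8$ disposes of the $D\not\equiv 5\pmod 8$ hypothesis exactly as needed. Your extra care in replacing $f_a$ by a properly equivalent form with leading coefficient coprime to $2D$ is the same reduction the paper makes with its ``without loss of generality'' remark in Section~\ref{sec:representability}, and your observation that proper equivalence preserves primitive representation justifies it.
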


Part (1) of Theorem~1.3 has been accepted as fact among experts on Apollonian packings, likely because it was assumed that the generalization of Iwaniec's result presented in this paper is immediate. Indeed, Corollary~1.3 of \cite{bourgain2012representation} assumes a result we prove here---Proposition~\ref{prop:lowerboundSsq}---and uses it in conjunction with the main results of that paper, so it is fitting that it is this main result of \cite{bourgain2012representation} that enables us to prove Theorem~1.3 above and hence close the gap in Corollary~1.3 of that paper.\footnote{Note that in \cite{bourgain2012representation}, the discriminant is allowed to be as large as a power of $\log N$, which is not something we tackle here. To apply our result to such a large discriminant situation would require additional work.}

\subsection{Paper structure}  This paper is organized as follows: in Section~\ref{sec:representability}, we recall in detail Iwaniec's results on representability of a number by the genus of a quadratic form. Section~\ref{sec:proof} we carry out the sieve arguments to essentially obtain a lower bound on the number of squarefree shifted primes represented by the genus of our quadratic form, and then apply \cite{bourgain2012representation} to pass to a single form. Section~\ref{sec:local} addresses necessary local conditions to make Theorem~\ref{thm:maintheorem} hold. Scattered throughout the paper, the reader will find small typographical corrections to \cite{Iwaniec72shiftedprimes}.  None of these are consequential. 

\vspace{0.1in}

\noindent {\bf Acknowledgements:} We thank Henryk Iwaniec for a conversation which inspired the methods in this paper.  We also thank the organizers of the conference \emph{Women in Numbers 4}, which led to this work.

\section{Representability by the genus}
\label{sec:representability}

Here we record Iwaniec's original statements that we modify, generalize, or use; and set the notation to match the notation used in that paper, with one notable exception:  we use $f$, $g$ for binary quadratic forms (instead of $\upvarphi_0$ and $\upvarphi$).

For an integer $a$, write $a^*$ for the largest positive odd divisor.  
 We use $\left( \frac{a}{b} \right)$ to denote the Kronecker symbol.  

The quadratic forms we consider have the form\footnote{Iwaniec's condition that $a > 0$ is merely a convenience to force definite forms to be positive definite.}
\[
f( \xi, \eta) = a\xi^2 + b \xi \eta + c \eta^2, \quad a > 0, \quad (a,b,c)=1,
\]
with
\[
D  =b^2 - 4ac
\]
not a perfect square.  
Without loss of generality (by an equivalence of quadratic forms) we may assume $(a,2D)=1$.  
We will factor $D$ as
\begin{equation}
    \label{eqn:Ddecomp}
    D = \pm 2^{\vartheta_2} p_1^{\vartheta_{p_1}} \cdots p_r^{\vartheta_{p_r}}, \quad \vartheta_{p_i} \ge 1, \quad i=1,\ldots, r, \quad \vartheta_2 \ge 0.
\end{equation}
Let $D_p = p^{-\vartheta_p} D$.  Then $D_2 = \pm D^*$.
Let $R_f$ denote the genus of $f$.

Suppose we wish to represent $n \in \ZZ$.  Iwaniec assumes\footnote{If one wishes to consider negative integers, one uses the same conditions, together with a condition ``at infinity'', i.e. a negative number cannot be represented by a positive definite form.} $n > 0$. We write $n$ with reference to the primes of $D$, as follows:  
\begin{equation}\label{eq:dm}
n = 2^{\varepsilon_2} p_1^{\varepsilon_{p_1}} \cdots p_r^{\varepsilon_{p_r}} m = dm, \quad \varepsilon_{p_i} \ge 0, \quad (m, 2D) = 1.
\end{equation}
Let $d_p = p^{-\varepsilon_p}d$. 
Then $d_2 = d^*$.

\begin{correction}
    In \cite{Iwaniec72shiftedprimes}, $d_p$ is erroneously defined as $p^{-\vartheta_p}d$ instead of $p^{-\varepsilon_p}d$.
\end{correction}

Given a genus $R_f$, we say than an integer $N$ is \emph{table-admissible} for $R_f$ if for each $p \mid N$, there exists a row in Tables~\ref{table1} and \ref{table2}, specifically one of rows~(\hyperref[row1]{1})--(\hyperref[row5]{5}) if $p = p_i \neq 2$;
        row (\hyperref[row6]{6}) if $p \nmid 2D$; and one of rows ~(\hyperref[row7]{7})--(\hyperref[row20]{20}) if $p=2$, such that
        \begin{enumerate}[label=\upshape{(\arabic*)}]
        \item $\varepsilon_{p}$ and $\vartheta_{p}$ satisfy the conditions in the first column;
            \item $m \in \mathscr{L}(\varepsilon_{p}, \vartheta_{p})$ of the fourth column; and
            \item $D$ satisfies the conditions of the fifth column.
        \end{enumerate}

\begin{theorem}[{\cite[Theorem 2]{Iwaniec72shiftedprimes}}]
\label{thm:Iwaniecthm2}
        The integer $n$ is primitively representable by $R_f$ if and only if $2nD$ is table-admissible for $R_f$.
\end{theorem}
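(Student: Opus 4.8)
This is the explicit, genus-level form of the local--global principle for primitive representation by a binary quadratic form, so the plan is to prove it in three stages: reduce to a local statement at each prime, discard the primes that carry no condition, and then match the surviving local conditions to the rows of the tables. First I would invoke the foundational fact that a genus is precisely a class of forms that are $\ZZ_p$-equivalent for every prime $p$ and $\RR$-equivalent, together with its representation-theoretic consequence (Gauss; see e.g. Cassels or O'Meara): an integer $n$ is primitively represented by \emph{some} form in the genus $R_f$ if and only if $f$ primitively represents $n$ over $\ZZ_p$ for every prime $p$, subject to the archimedean sign condition. The sign condition is automatic here, since we assume $n>0$ and $f$ is positive definite whenever $D<0$. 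Thus the theorem is equivalent to the assertion that the conjunction of all local primitive-representability conditions is captured by table-admissibility of $2nD$.

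Next I would show that no condition arises at primes $p\nmid 2nD$. For such $p$ the form $f$ reduces to a nondegenerate binary form over $\FF_p$: it is isotropic when $\kron{D}{p}=1$ and a nonzero scalar multiple of the norm form of $\FF_{p^2}/\FF_p$ when $\kron{D}{p}=-1$, and in either case it is surjective onto $\FF_p^\times$. Since $p\nmid n$, the value $n$ is a unit, any mod-$p$ representation of it is automatically primitive, and Hensel's lemma lifts it to $\ZZ_p$. Hence the only primes that can impose a genuine local condition are $p=2$, the primes dividing $D$, and the primes dividing $n$---exactly the primes dividing $2nD$. This is the point of testing the auxiliary integer $2nD$: requiring a matching row for each $p\mid 2nD$ demands precisely the local condition at each prime that can carry one, and the factor of $2$ forces the dyadic prime to always be checked.

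It then remains to compute, prime by prime, the local primitive-representability condition as a function of $\varepsilon_p=\ord_p(n)$, $\vartheta_p=\ord_p(D)$, the residue of the prime-to-$2D$ part $m$, and the class of $D$, and to verify that these agree with the tabulated rows. For odd $p\mid n$ with $p\nmid D$ this is short: over $\ZZ_p$ the form is a scaled norm form when $\kron{D}{p}=-1$, which represents only elements of even $p$-valuation, so $\varepsilon_p$ odd is obstructed---this is the content of row~(6) and the reason behind the introductory example $x^2-xy+y^2+1$. For odd $p\mid D$ one diagonalizes over $\ZZ_p$ and reads off the conditions on $(\varepsilon_p,\vartheta_p)$ together with a Kronecker symbol of $m$, giving rows~(1)--(5).

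The hard part will be the dyadic prime $p=2$, rows~(7)--(20). Here diagonalization over $\ZZ_2$ can fail, representability is governed by residues modulo $8$ and by whether $2\mid D$, and the interaction between primitivity and $\ord_2(n)$ is what produces the large number of cases. I would handle this through the standard classification of binary $\ZZ_2$-lattices and their primitively represented values, tracking $2$-adic units carefully so as to separate primitive from imprimitive representations. As the present paper does, I would then cross-check the resulting table by computer, precisely because this dyadic bookkeeping is where a transcription slip like the one corrected above is easiest to introduce.
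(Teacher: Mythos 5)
Your overall strategy is sound, but note first that the paper itself does \emph{not} prove Theorem~\ref{thm:Iwaniecthm2}: it imports it from Iwaniec's original paper, corrects two notational slips in the tables (the definition of $d_p$ and the replacement of $D^*$ by $D_2$), and then verifies the corrected tables by exhaustive computation for $|D|\le 1000$ and $|n|\le 100{,}000$. So any genuine proof is ``a different route'' from the paper; yours is the standard one and is essentially Iwaniec's own: reduce primitive representability by the genus to primitive representability over every $\ZZ_p$ together with the sign condition, observe that primes $p\nmid 2nD$ impose no condition, and compute the local condition at each remaining prime. Your first two steps are correctly argued. What you have not done is the part that constitutes the actual content of the theorem: the statement \emph{is} the twenty rows of Tables~\ref{table1} and \ref{table2}, and your proposal defers their derivation to ``diagonalize and read off'' and ``the standard classification of binary $\ZZ_2$-lattices.'' That is where all the work, and all the risk of error, lives---as the corrections in this paper illustrate---so as written this is a correct outline rather than a proof.

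Two concrete slips are worth flagging. First, your treatment of odd $p\mid m$ conflates the imprimitive and primitive obstructions: when $\kron{D}{p}=-1$ the local form is a unit times the norm form of the unramified quadratic extension of $\QQ_p$, so it represents exactly the elements of even valuation, but it represents a unit times $p^{2k}$ with $k\ge 1$ only \emph{imprimitively}, since the representing vector is then divisible by $p$. Hence the condition in row~(6) is $\kron{D}{p}=1$ outright, not merely ``$\varepsilon_p$ even.'' Second, the introductory example $x^2-xy+y^2+1$ is not an instance of row~(6); it is the dyadic phenomenon governed by rows~(7) and (12): for odd $D\equiv 5\pmod{8}$ the form is anisotropic over $\ZZ_2$ and primitively represents only odd integers, which is exactly the exceptional case excluded in Theorem~\ref{thm:maintheorem}. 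Neither slip invalidates the plan, but both occur at precisely the primitive-versus-imprimitive bookkeeping that the theorem is designed to capture.
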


\begin{table}[H]
\begin{minipage}{\textwidth}
{
\renewcommand{\arraystretch}{1.0}
\begin{tabular}{ccccccc} \toprule
     \multicolumn{2}{c}{$\varepsilon_{p_i}$ and $\vartheta_{p_i}$} & {$\aleph$} & {$\tau$} & {$\mathscr{L}(\varepsilon_{p_i},\vartheta_{p_i}) \subseteq (\ZZ/\tau\ZZ)$} & {Conditions on $D$} & \multicolumn{1}{c}{} \\ 
     \midrule
     \multicolumn{2}{c}{$\varepsilon_{p_i} < \vartheta_{p_i}$, $2 \mid \varepsilon_{p_i}$}
     & $\frac{p_i-1}{2}$ & $p_i$ & $\mathscr{L}'_{p_i} := \left\{ t : \left(\frac{t}{p_i}\right) = \left(\frac{ad_{p_i}}{p_i}\right)\right\}$& none &  \rowtag{row1}{1} \\ 
     \hline
          \multirow{5}{4em}{$\varepsilon_{p_i} = \vartheta_{p_i}$} & \multirow{4}{2.8em}{$2 \mid \varepsilon_{p_i}$}
     & \multirow{2}{1em}{\centering $1$} & \multirow{2}{1em}{\centering $1$} & \multirow{2}{2em}{\centering $\{0\}$} & \multirow{2}{9em}{\centering $p_i > 3$ or\\ ($p_i = 3$, $3 \mid 1+D_3$)} &  \rowtag{row2}{2} \\ 
     & \\ \cline{3-7}
                & 
     & \multirow{2}{1em}{\centering $1$} & \multirow{2}{1em}{\centering $3$}& \multirow{2}{10em}{\centering $\mathscr{L}''_3 := \{-ad_3 \}$} & \multirow{2}{9em}{\centering $p_i = 3$, $D_3 \equiv 1 \pmod{3}$} &  \rowtag{row3}{3} \\ 
     & \\ \cline{2-7}
                & $2 \nmid \varepsilon_{p_i}$
     & $\frac{p_i-1}{2}$ & $p_i$ & $\mathscr{L}''_{p_i} := \left\{ t : \left(\frac{t}{p_i}\right) = \left(\frac{-ad_{p_i}D_{p_i}}{p_i}\right) \right\}$ & none &  \rowtag{row4}{4} \\ \hline
          \multicolumn{2}{c}{$\varepsilon_{p_i} > \vartheta_{p_i}$, $2 \mid \vartheta_{p_i}$}
     & $1$ & $1$ & $\{0\}$& $\left( \frac{D_{p_i}}{p_i} \right)=1$ &  \rowtag{row5}{5} \\ \hline
          \multicolumn{2}{c}{$p \mid m$, $\vartheta_{p} = 0$}
     & $1$ & $1$ & $\{0\}$ & $\left( \frac{D}{p} \right)=1$  &  \rowtag{row6}{6} \\ 
 \bottomrule
\end{tabular}
}
\end{minipage}
\caption{Table 1 from \cite{Iwaniec72shiftedprimes}, corrected.  This table applies for odd primes.  Each row of the table represents a set of conditions on the prime (denoted $p_i$ if it is one of the odd primes dividing $D$, i.e., of the decomposition \eqref{eqn:Ddecomp}; and $p$ if it is coprime to $2D$), given by the first, fourth and fifth columns, as described in Theorem~\ref{thm:Iwaniecthm2}.  The other two columns provide extra information: the column headed $\aleph$ lists the cardinality of the set $\mathscr{L}(\epsilon_p, \upvartheta_p)$, and the column $\tau$ denotes the modulus with respect to which the elements of $\mathscr{L}(\epsilon_p, \upvartheta_p)$ are to be interpreted.  The last column is merely a labeling of the rows for reference.}
\label{table1}
\end{table}

\begin{minipage}{\textwidth}
{
\renewcommand{\arraystretch}{1.25}
\begin{tabular}{ccccccc} \toprule
     \multicolumn{2}{c}{$\varepsilon_2$ and $\vartheta_2$} & {$\aleph$} & {$\tau$} & {$\mathscr{L}(\varepsilon_2,\vartheta_2) \subseteq (\ZZ/\tau\ZZ)$} & {Conditions on $D$} & \\ \midrule
          \multirow{5}{4em}{$\varepsilon_{2} = 0$} & {$\vartheta_{2}=0$}
     & $1$ & $1$ & $\{0\}$ & $D \equiv  1 \pmod{4}$ & \rowtag{row7}{7} \\ \cline{2-7}
               & {$\vartheta_{2}=2$}
     & $1$ & $4$ & $\{ad_2\}$ & $D_2 \equiv -1 \pmod{4}$ &  \rowtag{row8a}{8a} \\\cline{2-7}
                    & {$\vartheta_{2}=2$}
     & $1$ & $1$ & $\{0\}$ & $D_2 \equiv 1 \pmod{4}$ &  \rowtag{row8b}{8b} \\ \cline{2-7}
                  & {$\vartheta_{2}=3$}
     & $2$ & $8$ & $\{ad_2, ad_2(1-2D_2) \}$ & none &  \rowtag{row9}{9} \\ \cline{2-7}
                       & {$\vartheta_{2}=4$}
     & $1$ & $4$ & $\{ad_2 \}$ & none &  \rowtag{row10}{10} \\ \cline{2-7}
                       & {$\vartheta_{2}\ge 5$}
     & $1$ & $8$ & $\{ad_2\}$ & none &  \rowtag{row11}{11} \\ \cline{1-7}
                      $\varepsilon_2 \ge 1$     & {$\vartheta_{2} =0$}
     & $1$ & $1$ & \{0\} & $D \equiv 1 \pmod{8}$ &  \rowtag{row12}{12} \\ \cline{1-7}
     \multirow{8}{4em}{$\varepsilon_{2} \ge 1$, $\vartheta_2 \ge 1$} & {$2 \mid \varepsilon_{2} \le \vartheta_2 - 5$}
     & $1$ & $8$ & $\{ ad_2 \}$ & none &  \rowtag{row13}{13} \\ \cline{2-7}
           & {$2 \mid \varepsilon_{2} = \vartheta_2 - 4$}
     & $1$ & $8$ & $\{ 5ad_2 \}$ & none &  \rowtag{row14}{14} \\ \cline{2-7}
              & {$2 \mid \varepsilon_{2} = \vartheta_2 - 3$}
     & $1$ & $8$ & $\{ ad_2(1-2D_2) \}$ & none &  \rowtag{row15}{15} \\ \cline{2-7}
                & {$2 \mid \varepsilon_{2} = \vartheta_2 - 2$}
     & $1$ & $4$ & $\{-ad_2D_2 \}$ & none &  \rowtag{row16}{16} \\ \cline{2-7}\cline{2-7}
                     & {$2 \nmid\varepsilon_{2} = \vartheta_2 - 2$}
     & $2$ & $8$ & $\{ -ad_2D_2, ad_2(2-D_2) \}$ & none &  \rowtag{row17}{17} \\ \cline{2-7}
                          & {$2 \nmid \varepsilon_{2} = \vartheta_2 - 1$}
     & $1$ & $4$ & $\{ ad_2\frac{1-D_2}{2}\}$ & $D_2 \equiv -1 \pmod{4}$ &  \rowtag{row18}{18} \\ \cline{2-7}
                 & {$2 \mid \varepsilon_{2} = \vartheta_2$}
     & $1$ & $1$ & $\{0\}$ & $D_2 \equiv 5 \pmod{8}$ &  \rowtag{row19}{19} \\ \cline{2-7}
                    & {$2 \mid\vartheta_2, \varepsilon_{2} > \vartheta_2$}
     & $1$ & $1$ & $\{0\}$ &  $D_2 \equiv 1 \pmod{8}$ &  \rowtag{row20}{20} \\ 
 \bottomrule
\end{tabular}\captionof{table}{Table 2 from \cite{Iwaniec72shiftedprimes}, corrected, dedicated to the prime $p=2$. The method of reading the table is the same as for Table~\ref{table1}.}
\label{table2}}
\end{minipage}

\begin{correction}
   In Tables~\ref{table1} and \ref{table2}, a correction has been made to \cite{Iwaniec72shiftedprimes}. Instead of $D^*$ in the original, one should use $D_2$ as in our tables. The replacement of $D^*$ by $D_2$ allows the table to be additionally valid for negative integers $n$ (subject to an additional condition at $\infty$). For clarity purposes, line (8) has been broken into two cases. The theorem was clarified to note that only prime divisors of $2nD$ require a line in the table. All other changes are cosmetic.
\end{correction}

The tables specify sets $\mathscr{L}(\varepsilon_p, \vartheta_p)$ of `possible $m$' depending on $a$, $D$ and $d$.  (Iwaniec only used this notation on the case of $p=2$ but we have defined it more generally.)  The column $\aleph$ gives $\# \mathscr{L}(\varepsilon_p, \vartheta_p)$.

Next, following Iwaniec, we give an alternate statement that is more useful for application. 
We write $k(n)$ for the squarefree kernel of $n$, i.e. $n = k(n) n_0^2$ where $n_0^2$ is the largest square divisor.  In particular, the Kronecker symbol is given by a quadratic character $\chi_{\mathfrak{f}(a)}$ of conductor\footnote{Iwaniec used $f$ for the conductor in place of $\mathfrak{f}$ but as we wish to reserve $f$ for a binary quadratic form, we have changed the font.} $\mathfrak{f}(a)$:
\[
\left( \frac{a}{b} \right)
= \chi_{\mathfrak{f}(a)}(b), \quad
\mathfrak{f}(a) = \left\{
\begin{array}{ll} 
k(a) & k(a) \equiv 1 \pmod{4} \\
4k(a) & k(a) \not\equiv 1 \pmod{4} \\
\end{array}\right.\!\!.
\]
Let 
\[
P = \left\{
p : \left( \frac{k(D)}{p} \right) = 1
\right\}.
\]

With reference to $\tau$ as a function of $\varepsilon_{p}$ and $\vartheta_p$ given in Tables~\ref{table1} and \ref{table2}, define
\begin{equation}
    \label{eqn:Q}
Q = \tau( \varepsilon_2, \vartheta_2) \prod_{p \mid D^*} \tau(\varepsilon_p, \vartheta_p) 
\end{equation}

\begin{theorem}[{\cite[Theorem 3]{Iwaniec72shiftedprimes}}]
\label{thm:I3}
    Let $n = dm$ as in (\ref{eq:dm}).  Then $n$ is primitively representable by $R_f$ if and only if \begin{enumerate}
    \item $d$ is table-admissible for $R_f$;
         \item $m \equiv L \pmod{Q}$ for some $0 < L < Q$ satisfying 
    \begin{enumerate}
        \item\label{item2a} $L \equiv \ell \pmod \tau$ for some $\ell \in \mathscr{L}(\varepsilon_2, \vartheta_2)$;
        \item for each $p_i \mid Q^*$ having $\varepsilon_{p_i} < \vartheta_{p_i}$, there exists $\ell \in \mathscr{L}'_{p_i}$ such that $L \equiv \ell \pmod{p_i}$;
                \item\label{item2c} for each $p_i \mid Q^*$ having $\varepsilon_{p_i} = \vartheta_{p_i}$, there exists $\ell \in \mathscr{L}''_{p_i}$ such that $L \equiv \ell \pmod{p_i}$;           
    \end{enumerate}   
        \item all prime factors of $m$ belong to $P$.  
    \end{enumerate}
\end{theorem}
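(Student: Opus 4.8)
The statement is a repackaging of Theorem~\ref{thm:Iwaniecthm2}, so the plan is to deduce it from there rather than to reprove the underlying local theory. Theorem~\ref{thm:Iwaniecthm2} says $n$ is primitively represented by $R_f$ exactly when $2nD$ is table-admissible, i.e.\ when every prime dividing $2nD$ admits a valid row of Tables~\ref{table1}--\ref{table2}. Writing $2nD = 2\,d\,m\,D$ and using $(m,2D)=1$, the primes dividing $2nD$ fall into two disjoint families: $S_1=\{2,p_1,\dots,p_r\}$, the primes dividing $2D$, and $S_2$, the primes dividing $m$. Since table-admissibility is a conjunction of independent local conditions, one per prime, I would establish the equivalence by showing that the $S_2$-conditions reassemble into item~(3), while the $S_1$-conditions split into an $m$-free part giving item~(1) and an $m$-part giving item~(2).

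For $p\in S_2$ we have $p\nmid 2D$, so $\vartheta_p=0$ and the only candidate is row~(\hyperref[row6]{6}), whose fourth-column set is $\{0\}$ with $\tau=1$ (imposing nothing on $m$) and whose single genuine requirement is $\kron{D}{p}=1$. As $p\nmid 2D$, the square factor of $D$ is a unit mod $p$, whence $\kron{D}{p}=\kron{k(D)}{p}$, and this holds precisely when $p\in P$. The conjunction over $p\in S_2$ is exactly item~(3); the only things to check are that no other row can apply at such a prime and that row~(\hyperref[row6]{6}) places no constraint on $m$.

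For $p\in S_1$ I would first record the structural fact, read off Tables~\ref{table1}--\ref{table2}, that the applicable row is uniquely pinned down by $(\varepsilon_p,\vartheta_p)$ together with $D$: the first-column hypotheses partition the pairs $(\varepsilon_p,\vartheta_p)$, and where one pair meets two rows---rows~(\hyperref[row2]{2})/(\hyperref[row3]{3}) at $p_i=3$ and rows~(\hyperref[row8a]{8a})/(\hyperref[row8b]{8b}) at $2$---the fifth-column conditions on $D$ are mutually exclusive and exhaustive. The local condition at $p$ then factors: an $m$-free part (that an applicable row exists, ruling out gaps such as $\varepsilon_{p_i}<\vartheta_{p_i}$ with $\varepsilon_{p_i}$ odd, and that its fifth-column condition on $D$ holds), which depends only on $d$ and $D$ and whose conjunction over $S_1$ is item~(1) (primes with $\varepsilon_p=0$ impose no such condition, so only $p\mid d$ contribute); and the fourth-column part $m\bmod\tau(\varepsilon_p,\vartheta_p)\in\mathscr{L}(\varepsilon_p,\vartheta_p)$. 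The rows with $\tau=1$ (rows~(\hyperref[row2]{2}),(\hyperref[row5]{5}), and the $\{0\}$-rows at $2$) impose nothing on $m$; the surviving constraints sit at $p=2$, with modulus $\tau(\varepsilon_2,\vartheta_2)$ and set $\mathscr{L}(\varepsilon_2,\vartheta_2)$, and at the odd $p_i\mid Q^*$, with modulus $p_i$ and set $\mathscr{L}'_{p_i}$ when $\varepsilon_{p_i}<\vartheta_{p_i}$ or $\mathscr{L}''_{p_i}$ when $\varepsilon_{p_i}=\vartheta_{p_i}$. Since $\tau(\varepsilon_2,\vartheta_2)$ is a power of $2$ and the remaining moduli are distinct odd primes, they are pairwise coprime with product $Q$ as in~\eqref{eqn:Q}, so the Chinese Remainder Theorem converts the simultaneous membership conditions into the single requirement that $m\equiv L\pmod{Q}$ for some $0<L<Q$ whose residues lie in the prescribed sets---namely conditions (a)--(c). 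Assembling items~(1)--(3) and invoking Theorem~\ref{thm:Iwaniecthm2} closes the equivalence.

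The only real work, and the step I expect to be delicate, is the structural claim that each triple $(\varepsilon_p,\vartheta_p,D)$ selects a unique row: the clean separation of the $m$-free conditions (item~(1)) from the congruence conditions (item~(2)) rests entirely on this determinacy, and it is where one must double-check the boundary cases $p_i=3$ and $p=2$ by hand. The Kronecker-symbol reduction for item~(3) and the Chinese Remainder bookkeeping for item~(2) are then routine.
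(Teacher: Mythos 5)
The paper gives no proof of this statement: it is imported verbatim as Theorem~3 of \cite{Iwaniec72shiftedprimes}, exactly as Theorem~\ref{thm:Iwaniecthm2} is imported as Theorem~2, so there is no in-paper argument to compare yours against. That said, your route---deducing the statement from Theorem~\ref{thm:Iwaniecthm2} by splitting the primes of $2nD$ into those dividing $2D$ and those dividing $m$, reducing row~(\hyperref[row6]{6}) to the condition $p\in P$ via $\kron{D}{p}=\kron{k(D)}{p}$ for $p\nmid 2D$, and gluing the fourth-column membership conditions by the Chinese Remainder Theorem into a single residue class modulo $Q$ as in \eqref{eqn:Q}---is the natural one and is essentially correct; it is in substance how the equivalence is obtained in the source.

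Two points should be tightened before this counts as a complete argument. First, you correctly isolate the crux, namely that each triple $(\varepsilon_p,\vartheta_p,D)$ admits at most one applicable row, with the only doubled cases, rows~(\hyperref[row2]{2})/(\hyperref[row3]{3}) at $p_i=3$ and rows~(\hyperref[row8a]{8a})/(\hyperref[row8b]{8b}) at $p=2$, resolved by mutually exclusive and exhaustive conditions on $D_3 \bmod 3$ and $D_2\bmod 4$ respectively---but you defer the check rather than performing it. Since by your own account the clean separation of item~(1) from item~(2) rests entirely on this, the finite verification against Tables~\ref{table1} and \ref{table2} needs to be written out; it should include the observation that for primes with $\varepsilon_p=0$ the fifth-column conditions are automatically satisfied (row~(\hyperref[row7]{7}) follows from $D\equiv 0,1\pmod 4$, and rows~(\hyperref[row8a]{8a})/(\hyperref[row8b]{8b}) together exhaust the possibilities), which is precisely what justifies letting item~(1) quantify only over $p\mid d$. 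Second, the paper's definition of table-admissibility literally includes the fourth-column condition on $m$; your reading of item~(1) as comprising only the first- and fifth-column ($m$-free) conditions is the intended one, and the one consistent with how the theorem is invoked in the proof of Lemma~\ref{lemma:sq}, but you should state explicitly that this is the convention you adopt, since otherwise items~(1) and (2) would overlap and the stated equivalence would be redundant rather than a factorization of the local conditions.
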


Residues $L$ modulo $Q$ satisfying \eqref{item2a}--\eqref{item2c} will be called \emph{residue-admissible}.

In the proof of Theorem~\ref{thm:maintheorem}, care will be given to checking that suitable values $d$ and $L$ actually exist. This is not guaranteed and is outlined in Lemma~\ref{lemlocal}.

\begin{remark}
We have computationally verified the correctness of the updated Tables \ref{table1} and \ref{table2} for a range of discriminants $D$ and integers $n$. Using PARI/GP \cite{PARI2}, we can initialize quadratic forms, find a representative for each element of the class group, and separate them into genera. Using the \texttt{qfbsolve} command, we can find all proper representations of an integer by a quadratic form, and compare these results to Tables \ref{table1} and \ref{table2}. The \texttt{qfbsolve} command uses Gauss reduction and quadratic form equivalence testing to determine all solutions.

For all discriminants $D$ with $-1000\leq D\leq 1000$ and non-zero integers $n$ with $-100,000\leq n\leq 100,000$, we verified Theorem~\ref{thm:Iwaniecthm2}.
This code can be found in the GitHub repository \cite{GHBQFRepCheck}. In particular, \texttt{\detokenize{test_thm2(D, n1, n2)}} checks that the theorem is correct for all genera of discriminant \texttt{D} and all non-zero integers between \texttt{n1} and \texttt{n2}. Running the code on the above range took around 30 minutes on a laptop.

\end{remark}

\section{Proof of Theorem~\ref{thm:maintheorem}}\label{sec:proof}

Here we prove the lower bound in Theorem~\ref{thm:maintheorem}, as the upper bound is a direct consequence of \cite{Iwaniec72shiftedprimes}. 
We use $\mu^2(x)=1$ to denote the condition that $x$ is squarefree (this is the square of the M\"obius function). For a finite set of primes $\setS$ and $x\in \mathbb{N}$ we write 
\begin{equation*}
\mu^2_{\setS} (x)=\left\{\begin{array}{ccc} 1 & \mbox{ if } & x \mbox{ is squarefree outside of } \setS\\
0 & & \mbox{ otherwise.} \\ \end{array}\right.
\end{equation*}
For the rest of this section we set $\setS=\{2,3\}$.

\subsection{Set-up}

We define the counting function $S_{sq}(N)$ to be the number of primes $p\leq N$ which satisfy the conditions
\begin{eqnarray*}
&\mu_{\setS}^2(B^{-1}(p-A))=1,\ p-A=Bg(x,y) \mbox{ for some } g\in R_f \mbox{ and } x,y\in \mathbb{Z},\ \gcd(x,y)=1\\& 2^{5}\nmid B^{-1}(p-A),\ 3^3\nmid B^{-1}(p-A), \quad p\equiv l\mod \mtil,
\end{eqnarray*}
where $R_f$ is the genus of $f$.
A central step towards proving Theorem 1 will be to provide a lower bound on this quantity. 

Let $Z>3$ be a parameter to be chosen later. Define
\begin{equation}\label{eq:Pi}
\Pi_{Z,D,A}=\prod_{\substack{p<Z\\ p\nmid 2DA\mtil}}p.
\end{equation}
Let $S_1(N,Z)$ be equal to the number of primes $p\leq N$ with $p\equiv l\mod \mtil$ such that 
\begin{eqnarray*}&&\gcd\left(\frac{p-A}{B},\Pi_{Z,D,A}\right)=1,\\&&\frac{p-A}{B}\mbox{ is primitively represented by some } g \in R_f,\\&& q^2\nmid \frac{p-A}{B} ,\, \forall q\mid D \mbox{ with } (q,6)=1,\\
&& 2^5\nmid \frac{p-A}{B} , 3^3 \nmid \frac{p-A}{B}.
\end{eqnarray*}
Moreover for every $d\in \mathbb{N}$ we define $S_2(N,d)$ to be equal to the number of primes $p\leq N$ such that $p\equiv l\mod \mtil$ and
$$\frac{p-A}{B}=d^2g(x,y) \mbox{ for some form } g\in R_f,\ \gcd(x,y)=1.$$

Eventually, we want a lower bound on $S_{sq}(N)$, as any squarefree number represented by a form is by default primitively represented. We will then use result from \cite{bourgain2012representation} to move from the genus to one form. Our first step towards this is to break the task into two individual counting functions.  The following Lemma makes use of Theorem~\ref{thm:I3}.

\begin{lemma}
\label{lemma:sq}
$$S_{sq}(N) \geq S_1(N,Z) - \sum_{Z\leq d\ll N^{1/2}} S_2(N,d) - O(1),$$
where the implicit constant depends only on $A$.
\end{lemma}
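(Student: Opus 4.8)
The plan is to compare the three counting functions prime-by-prime. Every prime counted by any of $S_{sq}(N)$, $S_1(N,Z)$, $S_2(N,d)$ satisfies $p\le N$ and $p\equiv l\bmod\mtil$; write $n=(p-A)/B$. The defining conditions of $S_1(N,Z)$ and $S_{sq}(N)$ agree in requiring that $n$ be primitively represented by some $g\in R_f$, that $2^5\nmid n$, and that $3^3\nmid n$. They differ only in their squarefreeness hypotheses: $S_1(N,Z)$ asks that $\gcd(n,\Pi_{Z,D,A})=1$ and that $q^2\nmid n$ for the primes $q\mid D$ with $(q,6)=1$, whereas $S_{sq}(N)$ asks for the stronger condition $\mu_\setS^2(n)=1$. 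Consequently a prime counted by $S_1(N,Z)$ fails to be counted by $S_{sq}(N)$ exactly when $\mu_\setS^2(n)=0$, i.e. when some prime $q\notin\{2,3\}$ has $q^2\mid n$. Since $S_{sq}(N)$ is at least the number of primes counted by both functions, it suffices to bound the number of such \emph{bad} primes by $\sum_{Z\le d\ll N^{1/2}}S_2(N,d)+O(1)$.

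First I would locate the offending square factor. Fix a prime $q\ge 5$ with $q^2\mid n$. If $q<Z$ and $q\nmid 2DA\mtil$, then $q\mid\gcd(n,\Pi_{Z,D,A})$, contradicting the definition of $S_1(N,Z)$; so a factor $q<Z$ must divide $2DA\mtil$. The possibility $q\mid D$ is ruled out by the condition $q^2\nmid n$ imposed in $S_1(N,Z)$. If $q\mid\mtil$, then $\gcd(\mtil,2DB)=1$ gives $q\nmid B$, so $q\mid n$ forces $p\equiv A\bmod q$; combined with $p\equiv l\bmod q$ this yields $l\equiv A\bmod q$, contradicting $\gcd(l-A,\mtil)=1$. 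If $q\mid A$, then $q\mid p-A=Bn$ together with $q\mid A$ gives $q\mid p$, hence $p=q$; this accounts for at most $\omega(A)=O_A(1)$ primes. Thus, apart from an $O(1)$ exceptional set depending only on $A$, every bad prime admits a square factor $q^2\mid n$ with $q\ge Z$; and $q^2\le n\ll N$ forces $q\ll N^{1/2}$.

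The heart of the argument is to check that each remaining bad prime is counted by $S_2(N,q)$. Here $q\ge Z>3$ and $q\nmid 2D$ (since $q\nmid D$), and $q^2\mid n$, so in the decomposition $n=dm$ we have $q\mid m$. I claim $n/q^2=d\,(m/q^2)$ is still primitively represented by $R_f$, which I would verify through Theorem~\ref{thm:I3}: condition (1) is unaffected, as $d$ is unchanged; condition (3) holds because the prime factors of $m/q^2$ lie among those of $m$, hence all in $P$; and condition (2) holds because residue-admissibility modulo $Q$ is governed only by the Kronecker symbols $\left(\frac{\cdot}{p_i}\right)$ for $p_i\mid Q^*$ and by a residue modulo $\tau(\varepsilon_2,\vartheta_2)\mid 8$, all of which are unchanged upon dividing out the square $q^2$ coprime to $2D$, using $\left(\frac{q^2}{p_i}\right)=1$ and $q^2\equiv 1\bmod 8$. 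Hence some $g\in R_f$ primitively represents $n/q^2$, so $n=q^2 g(x,y)$ with $\gcd(x,y)=1$, i.e. $p$ is counted by $S_2(N,q)$ with $Z\le q\ll N^{1/2}$. Since each bad prime is counted by at least one such $S_2(N,q)$, the number of bad primes is at most $\sum_{Z\le d\ll N^{1/2}}S_2(N,d)+O(1)$, and the lemma follows.

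I expect the pull-out step in the third paragraph to be the main obstacle: one must be sure that primitive representability by the genus is insensitive to removing a square factor coprime to $2D$. The content is entirely in condition (2) of Theorem~\ref{thm:I3}, where the finitely many residue conditions modulo $Q$ must be shown invariant under multiplication by $q^{-2}$; this reduces to $q^2$ being a quadratic residue modulo each odd $p_i\mid D$ together with $q^2\equiv 1\bmod 8$, but tracking these invariances through the rows of Tables~\ref{table1} and~\ref{table2} is the delicate bookkeeping.
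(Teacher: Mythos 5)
Your proof is correct and follows essentially the same route as the paper's: identify the bad primes counted by $S_1(N,Z)$ but not by $S_{sq}(N)$ as those for which $n=(p-A)/B$ has a square factor $q^2$ with $q\ge Z$ and $q\nmid 2D$, then divide out $q^2$ and use Theorem~\ref{thm:I3} (invariance of the residue classes $\mathscr{L}'_{p_i}$, $\mathscr{L}''_{p_i}$ under multiplication by invertible squares, together with $\bar q^2\equiv 1\bmod 8$) to conclude that $p$ is counted by $S_2(N,q)$. Your case analysis showing that the offending $q$ must satisfy $q\ge Z$ (ruling out $q\mid D$ via the $S_1$ conditions and $q\mid\mtil$ via $\gcd(l-A,\mtil)=1$, and absorbing $q\mid A$ into the $O_A(1)$ term) is a detail the paper leaves implicit, but the argument is the same.
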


\begin{proof} 
We will show that if $n = \frac{p -A}{B}$ is counted by $S_1(N,Z)$ but not by $S_{sq}(N)$, then it arises as an element of one of the counting functions $S_2(N,d)$.

Let $n = \frac{p -A}{B}$, $p$ prime with $p\equiv l\mod \mtil$, such that $\gcd(\frac{p-A}{B},\Pi_{Z,D,A})=1$ and such that $n = \frac{p-A}{B}$ is primitively represented by some form in the genus of $f$. Moreover assume that if $q\mid D$ and $(q,6)=1$, then $q^2\nmid \frac{p-A}{B}$, and we assume that $2^5\nmid \frac{p-A}{B}$ and $3^3 \nmid \frac{p-A}{B}$. If we assume that $n = \frac{p-A}{B}$ is not counted by $S_{sq}(N)$ and $p>A$, then there exists a prime $q \geq Z$ such that 
$$q^2\mid \frac{p-A}{B}.$$ 
Write $n = \frac{p-A}{B} = q^2q_0$ for some $q_0 \in \ZZ$. From here on we follow the notation of Iwaniec's article \cite{Iwaniec72shiftedprimes} and Section~\ref{sec:representability}.

Note that $Q$ (as in Section~\ref{sec:representability}) is supported on primes dividing $2D$ by construction (in fact $Q \mid 8 D$). 
So if we choose $Z>2$ sufficiently large, then $q$ is coprime to $Q$.

Write $q^2 q_0=dm$ such that $d$ is composed of primes dividing $2D$ and such that $\gcd(m,2D)=1$.  Observe that $\gcd(d,q) = 1$, so $q^2 \mid m$.  Then, by Theorem~\ref{thm:I3}, the number $d$ is table-admissible, and we have
$$p\mid m\Rightarrow p\in P.$$
Also, there exists a residue-admissible residue class $L$ modulo $Q$ such that 
$$m\equiv L \mod Q.$$
 We now use Theorem~\ref{thm:I3}
 to show that $q_0 = dm/q^2 = dm'$ 
 is primitively represented by the genus
 $R_f$.   We verify each condition in turn. 
 First, $d$ is the same as before, so it continues to satisfy the conditions of the tables.  Next, all primes diving $m'$ are contained in $P$. 
 
 Third, 
 \[
 m' = m/q^2 \equiv \bar q^2 L \mod Q,
 \] 
where we write $\bar q$ for the inverse of $q$ modulo $Q$.  We wish to show that, if $L$ is residue-admissible according to the second part of Theorem~\ref{thm:I3}, then so is $\bar q^2 L$.  Note that $\bar q^2 \equiv 1 \mod 8$ and hence the residue class $\bar q ^2 L$ remains admissible at the place $2$. For all other primes $p_i$ dividing $D$, we note that the sets $\mathscr{L}_{p_i}'$ and $\mathscr{L}_{p_i}''$ are invariant under multiplication with invertible squares, so admissibility is maintained here also.

Thus, we apply Theorem~\ref{thm:I3} and deduce that the number $q_0$ is primitively represented by the genus of $f$. This finishes the proof of the claim.
\end{proof}

In Subsection~\ref{sec:squarefree-sieve}, we give an upper bound on the second term; and in Subsection~\ref{sec:half-dimensional-sieve}, we give a lower bound on the first term.

\subsection{The squarefree sieve}
\label{sec:squarefree-sieve}

We begin with an upper bound on the $S_2(N,d)$ term in Lemma~\ref{lemma:sq}, making use of Theorem 1 in \cite{Iwaniec72shiftedprimes}.

\begin{lemma}
\label{lemma:sigma4}
For every $\epsilon >0$ have
$$\sum_{Z\leq d\ll N^{1/2}} S_2(N,d) \ll  \frac{N}{(\log N)^{3/2} Z^{1-\epsilon}} + \frac{N}{(\log N)^2}. $$
\end{lemma}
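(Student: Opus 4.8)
The plan is to read each term $S_2(N,d)$ as a count of primes represented by a shifted quadratic form whose leading coefficient has been enlarged by $d^2$, and then to invoke the upper bound half of Iwaniec's Theorem~1 uniformly in that coefficient. Concretely, if $p$ is counted by $S_2(N,d)$ then $p=(Bd^2)\,g(x,y)+A$ for some $g\in R_f$ and $x,y\in\ZZ$. Dropping the conditions $\gcd(x,y)=1$ and $p\equiv l\bmod\mtil$ (each only decreases the count) gives
\[
S_2(N,d)\le \sum_{g\in R_f}\#\{p\le N\text{ prime}: p=(Bd^2)\,g(x,y)+A\text{ for some }x,y\in\ZZ\}.
\]
Since the genus $R_f$ contains $O_D(1)$ classes, it suffices to bound each summand, and each summand is precisely the quantity controlled by Iwaniec's Theorem~1 for the form $g$ with leading coefficient $Bd^2$ and shift $A$. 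The degenerate case $\gcd(A,d)>1$ forces any such prime to divide $\gcd(A,Bd^2)$, contributing at most one prime per pair $(d,g)$, which is harmless.

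First I would record the upper bound of Theorem~1 in a shape explicit in its dependence on $Bd^2$. The expected form is
\[
\#\{p\le N\text{ prime}: p=(Bd^2)\,g(x,y)+A\}\ll \frac{\mathfrak{S}_d\,N}{Bd^2\,(\log N)^{3/2}},
\]
where the size factor $N/(Bd^2)$ reflects that the represented values satisfy $g(x,y)\le N/(Bd^2)$, the exponent $3/2$ combines the primality of $p$ (one power of $\log N$) with the Landau density of quadratic-form values (a further $(\log)^{1/2}$), and $\mathfrak{S}_d$ is a singular-series-type product over primes dividing $2Dd$. Standard divisor bounds give $\mathfrak{S}_d\ll_\epsilon d^\epsilon$.

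Next I would split the range of $d$. For $Z\le d\le N^{1/2}/(\log N)^{C}$ the represented values have size $N/(Bd^2)$ large enough that $\log(N/Bd^2)\gg \log N$, so the half-dimensional density genuinely contributes $(\log N)^{-1/2}$ and the displayed bound applies with the full saving; summing it yields
\[
\sum_{Z\le d}\frac{\mathfrak{S}_d\,N}{Bd^2\,(\log N)^{3/2}}\ll \frac{N}{(\log N)^{3/2}}\sum_{d\ge Z}\frac{1}{d^{2-\epsilon}}\ll \frac{N}{(\log N)^{3/2}\,Z^{1-\epsilon}}.
\]
For the complementary range $N^{1/2}/(\log N)^{C}\le d\ll N^{1/2}$, where $N/(Bd^2)$ is small and the form-density saving degrades, I would instead estimate $S_2(N,d)$ crudely: either by the number of lattice points with $Bd^2 g(x,y)\le N$, which is $\ll N/(Bd^2)+O(1)$, or by fixing the bounded value $v=g(x,y)$ and counting prime values of the one-variable polynomial $Bvt^2+A$. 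Since the available values of $g$ are sparse here, this range is easily absorbed into $\tfrac{N}{(\log N)^2}$.

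The main obstacle is the first step: Iwaniec states Theorem~1 for a fixed form with fixed coefficients, whereas here the coefficient $Bd^2$ grows with $N$, so one must verify that the upper bound holds \emph{uniformly} in $Bd^2$ and extract the correct dependence---namely the factor $N/(Bd^2)$ together with a divisor-bounded local density $\mathfrak{S}_d\ll_\epsilon d^\epsilon$. This is plausible because upper-bound sieves are typically uniform in the modulus, but it requires care to track the local densities at primes dividing $d$ and to confirm that the logarithmic saving is truly $(\log N)^{-3/2}$ across the main range rather than the weaker $(\log(N/Bd^2))^{-3/2}$. The large-$d$ regime, where the density saving breaks down, is the other point needing attention, though the sparsity of representable values there makes crude bounds sufficient.
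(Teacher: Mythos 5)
Your overall strategy is the same as the paper's: view $p = Bd^2 g(x,y) + A$ as representation by a shifted form with coefficient $Bd^2$, apply the upper bound of Iwaniec's Theorem~1 to get a saving of order $d^{-(2-\epsilon)}$ per term, dispose of the case $\gcd(d,A)>1$ trivially, and use a crude lattice-point count $S_2(N,d) \ll N/d^2$ for the largest values of $d$. The one substantive discrepancy is your treatment of the secondary term. The bound the paper extracts from Iwaniec's Theorem~1 is not the clean $\mathfrak{S}_d N/(Bd^2(\log N)^{3/2})$ you posit, but rather $S_2(N,d) \ll N/(\phi(d^2)(\log N)^{3/2}) + N/(\log N)^5$, with an additive error term that does not shrink as $d$ grows. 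This forces the sieve bound to be used only in the short range $Z \le d \le (\log N)^3$, where the error terms accumulate to $(\log N)^3 \cdot N/(\log N)^5 = N/(\log N)^2$ --- which is precisely the origin of the second term in the lemma's statement --- while the trivial bound $N/d^2$ covers everything from $(\log N)^3$ up to $N^{1/2}$, contributing only $N/(\log N)^3$. Your split at $N^{1/2}/(\log N)^C$ instead presumes the clean uniform bound holds all the way up to $d$ of size nearly $N^{1/2}$, which is much stronger uniformity than Iwaniec's theorem provides and is exactly the unresolved ``main obstacle'' you flag at the end; your attribution of the $N/(\log N)^2$ term to the large-$d$ tail is likewise a misdiagnosis. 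The fix is cheap: move the split point down to $(\log N)^3$, carry the additive error term, and the rest of your argument goes through as in the paper.
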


\begin{proof}
If $\gcd(d,A)>1$, then $S_2(N,d)\leq 1$. So we now consider the case $\gcd(d,A)=1$. An application of Theorem 1 in \cite{Iwaniec72shiftedprimes} gives the upper bound
$$S_2(N,d)\ll \frac{N}{\phi(d^2)(\log N)^{3/2}} + \frac{N}{(\log N)^5}$$
and note that the implicit constant is independent of $d$.

Using the crude lower bound $\phi(n)\gg n^{1-\epsilon}$, we obtain the bound
\begin{align*}
\sum_{Z\leq d\leq (\log N)^{3}} S_2(N,d) &\ll \sum_{Z\leq d\leq (\log N)^{3}}\left( \frac{N}{\phi(d^2)(\log N)^{3/2}} + \frac{N}{(\log N)^5}\right) \\
&\ll \frac{N}{(\log N)^{3/2}}\sum_{Z\leq d\leq (\log N)^{3}} \frac{1}{d^{2-\epsilon}}  + \frac{N}{(\log N)^2}\\
&\ll \frac{N}{(\log N)^{3/2} Z^{1-\epsilon}} + \frac{N}{(\log N)^2}.
\end{align*}
We bound the contribution for $ (\log N)^3<d \ll N^{1/2}$ by simply observing that $S_2(N,d)\ll \frac{N}{d^2}.$ With this we obtain
\begin{align*}
\sum_{(\log N)^{3}\leq d\ll N^{1/2}} S_2(N,d) \ll \sum_{(\log N)^{3}\leq d\ll N^{1/2}} \frac{N}{d^2} \ll \frac{N}{(\log N)^3}.
\end{align*}
We deduce that
$$ \sum_{Z\leq d\ll N^{1/2}} S_2(N,d) \ll  \frac{N}{(\log N)^{3/2} Z^{1-\epsilon}} + \frac{N}{(\log N)^2}. $$
\end{proof}

\subsection{Setting up a $1/2$-dimensional sieve}
\label{sec:half-dimensional-sieve}

In this section, we turn to lower bounds for the counting function $S_1(N,Z)$.  Since we are modifying arguments of Iwaniec \cite{Iwaniec72shiftedprimes}, we match the notation of that paper, to ease comparison.

We will now define a set $\mathscr{M}$ over which we sieve, i.e. we will reduce the problem to a sum suitable for the $1/2$-dimensional sieve, of the form
\[
 \sum_{\substack{m\in \mathscr{M}\\ q\mid m \Rightarrow q\in P}}1.
\]
The corresponding part of \cite{Iwaniec72shiftedprimes} is pp. 214-5.

We define the counting function
\[
S_{1,Z}(N,f,B,A) = \sum_{\substack{p\leq N\\ p=Bg(\xi,\eta)+A\\ (\xi,\eta)=1,\ g\in R_{f}\\ (\frac{p-A}{B},\Pi_{Z,D,A})=1\\ p\equiv l \mod \mtil\\q^2\nmid \frac{p-A}{B} ,\, \forall q\mid D,\ (q,6)=1\\ 2^5\nmid \frac{p-A}{B},\ 3^3\nmid \frac{p-A}{B}}}1,
\]

where $\Pi_{Z,D,A}$ is as in (\ref{eq:Pi}). This is the same as Iwaniec's $S_1(N, f,B,A)$ with the addition of a condition that $\frac{p-A}{B}$ be coprime to $\Pi_{Z,D,A}$ and $p\equiv l \mod \mtil$ and that $q^2\nmid \frac{p-A}{B}$ for $q\mid D$ with $(q,6)=1$ and $2^5\nmid \frac{p-A}{B},\ 3^3\nmid \frac{p-A}{B}$.
Then we have
\[
S_{1,Z}(N, f, B, A) = S_1(N, Z).
\]

We assume that $\gcd(\mtil,2DB)=1$ and that $\gcd(l-A,\mtil)=1$. We start by supposing a table-admissible integer $d$ and a residue-admissible $0<L<Q$ (Theorem~\ref{thm:I3}) such that $\gcd(BdL + A, QBd)=1$, where $Q$ is defined in \eqref{eqn:Q}.  We demonstrate the existence of such an integer in Lemma~\ref{lemlocal} (Section~\ref{sec:local}); we also see that we can choose such an integer $d$ in the cases that are relevant to us, in a way such that $d$ is squarefree outside of $2,3$ and such that $2^5\nmid d$ and $3^3\nmid d$.

    \begin{correction}
        The issue of existence of admissible $d, L$ is relevant in the original also.  In \cite[p.229, (4.8)]{Iwaniec72shiftedprimes}, the quantity $\Omega_{A,B,D}$ is defined.  It is possible for this quantity to be zero. Thus the lower bound on $\Omega_{A,B,D}$ in the middle of page 230 does not always hold. This is only a problem if there are no $d$ and $L$ satisfying the conditions $2 \mid ABd$ and $(BdL + A, QBd) = 1$.  As we will see in Lemma~\ref{lemlocal}, the only case of concern is if $2 \nmid AB$ and $D \equiv 5 \pmod{8}$.  However, there is a way to circumvent this.  For the lower bounds on $S_\infty$ and $S_2$ in Iwaniec's Theorem 1, there is no primitivity condition on the representation, hence on can replace here $B$ by $4B$ by substituting the variables $\xi$ and $\eta$ by $2\xi$ and $2\eta$ and with this obtain forms for which one has the desired lower bound on $\Omega_{A,4B,D}$ (which is used on page 231). 
    \end{correction}

\color{black}
Note that $Q$ is only composed of primes dividing $2D$.
We consider the sets
\[
\mathscr{C} = \{ p : |A| < p \le N, p \equiv BdL + A \pmod{QBd},\ p\equiv l\pmod \mtil\},
\]
and
\[
\mathscr{M}' = \left\{ m' : m' = \frac{p-A}{Bd}, p \in \mathscr{C} \right\},
\]
where $p$ is always understood to be a prime. As $\gcd(\mtil,2BD)=1$ we note that the two congruence conditions in $\mathscr{C}$ are independent and define a unique congruence class for $p$ modulo $\mtil QBd$. 
We define the set $\mathscr{M}$, introducing a coprimality to $\Pi_{Z,D,A}$:
\[
\mathscr{M} = \{ m \in \mathscr{M}' : \gcd(m, 2D\Pi_{Z,D,A} ) = 1 \}.
\]

Let $P$ be as in Theorem \ref{thm:I3}. We rewrite the counting function $S_{1,Z}(N,f,B,A)$ as follows: 
\begin{align*}
S_{1,Z}(N,f,B,A) = \sum_{\substack{p\leq N\\ p=Bg(\xi,\eta)+A\\ (\xi,\eta)=1,\ g\in R_{f}\\ (\frac{p-A}{B},\Pi_{Z,D,A})=1\\ p\equiv l \pmod \mtil\\ q^2\nmid \frac{p-A}{B} ,\, \forall q\mid D,\ (q,6)=1\\ 2^5\nmid \frac{p-A}{B},\ 3^3\nmid \frac{p-A}{B}}}1 = \sum_{\substack{d, L\in \mathscr{L} \\\mu_{\setS}^2(d)=1\\ \text{admissible}\\ 2^5\nmid d,\ 3^3\nmid d}}  \sum_{\substack{|A|< p\leq N\\ p\equiv BdL+A \mod{QBd}\\q\mid \frac{p-A}{Bd}\Rightarrow q\in P\\ (\frac{p-A}{Bd},2D\Pi_{Z,D,A})=1 \\ p\equiv l \pmod \mtil}}1+O(|A|). \\
\end{align*}

Here the sums over $d$ and $L\in \mathscr{L}$ are over all admissible pairs as in Theorem \ref{thm:I3}. Note that if $L,d$ are an admissible pair with $\gcd(QBd,BdL+A)>1$, and $p\equiv BdL+A\pmod{QBd}$, then $p$ must be a divisor of $2D$ (as $\gcd(A,B)=1$) and hence

\begin{align}\label{eqndecompS1}
S_{1,Z}(N,f,B,A)= \sum_{\substack{d\\ 2\mid ABd\\\mu_{\setS}^2(d)=1\\2^5\nmid d,\ 3^3\nmid d }}\sum_{\substack{L\in \mathscr{L}\\\gcd(QBd,BdL+A)=1}} \sum_{\substack{m\in \mathscr{M}\\ q\mid m \Rightarrow q\in P}}1 + O(|2DA|).
\end{align}

Hence we are interested in lower bounds for the number of $m\in \mathscr{M}$ such that $m$ is only divisible by primes in $P$, which leads us to a sieve problem.

\subsection{Level of distribution}\label{sec:dist}
Our next goal is to provide a level of distribution for the set $\mathscr{M}$. For this it is sufficient to consider moduli $\varrho$ with $\gcd(\varrho,2D\Pi_{Z,D,A}\mtil)=1$ as elements $m\in \mathscr{M}$ are automatically coprime to $2D\Pi_{Z,D,A}\mtil$ (using the assumption that $\gcd(l-A,\mtil)=1$). 

Let $\varrho$ be a modulus with $\gcd(\varrho, 2D\Pi_{Z,D,A}\mtil)=1$ and write 
$$\mathscr{M}_\varrho = \{ m\in \mathscr{M}: \varrho \mid m\}.$$
Our goal is to approximately evaluate $|\mathscr{M}_{\varrho}|$ and bound the error term. For this we compare the set $\mathscr{M}$ again with the set $\mathscr{M}'$. Note that if $m\in \mathscr{M}'$, then we have $\gcd(m,QA)=1$. 
For a modulus $\sigma$ with $\gcd(\sigma,QA)=1$ we set
$$\mathscr{M}'_\sigma=\{ m\in \mathscr{M}': \sigma\mid m\}. $$
We define $D_1$ to be the greatest divisor of $2D\Pi_{Z,D,A}$ coprime to $QA$. For a modulus $\varrho$ with $\gcd(\varrho,2D\Pi_{Z,D,A})=1$, we observe that
\begin{equation}\label{eqn1}
|\mathscr{M}_{\varrho}|= \sum_{\varrho_1\mid D_1} |\mathscr{M}'_{\varrho\varrho_1}| \mu(\varrho_1).
\end{equation}
Note that for every divisor $\varrho_1\mid D_1$ we have $\gcd(\varrho,\varrho_1)=1$ and $\gcd(\varrho\varrho_1,QA)=1$.\par
Consider $\sigma$ with $\gcd(\sigma,QA\mtil)=1$. Let $\sigma_0$ be an integer with $\sigma_0Q+L\equiv 0 \pmod \sigma$. Then we have
$$\mathscr{M}'_\sigma = \left\{m:\ m=\frac{p-A}{Bd}, p\equiv A+BdL+QBd\sigma_0 \pmod{QBd\sigma}, \ p\equiv l \pmod{ \mtil},\ |A|<p\leq N\right\}.$$
Under the additional assumption $\gcd(BdL+A,QBd)=1$ we have
$$\gcd(A+BdL+QBd\sigma_0,QBd\sigma)=1$$
and $\gcd(\mtil,QBd\sigma)=1$. Hence the two congruence conditions in $\mathscr{M}_\sigma'$ give a unique invertible congruence class, say $\overline{l}$, for the primes $p$ modulo $QBd\sigma\mtil$ and we obtain (using the standard notation $\pi(N,e,f)$ for the number of primes $p$ less than $N$ satisfying $p \equiv f \pmod{e}$)
$$||\mathscr{M}_\sigma'|- \pi(N,QBd\sigma\mtil,\overline{l})|\leq |A|.$$
Assume we are given a modulus $\varrho$ with $\gcd(\varrho,2D\Pi_{Z,D,A}\mtil)=1$. By equation (\ref{eqn1}) we obtain 
\newcommand{\Li}{\operatorname{Li}}
\begin{align*}
&\left| |\mathscr{M}_{\varrho}| - \Li N \sum_{\varrho_1\mid D_1} \frac{\mu(\varrho_1)}{\varphi(QBd\varrho\varrho_1\mtil)}\right| \\
&\leq 2|D\Pi_{Z,D,A}| \max_{\substack{\overline{l},x\\ |\varrho| \leq x \leq |2D\Pi_{Z,D,A}\varrho|\\ (\overline{l},QBdx\mtil)=1}} \left| \pi(N,QBdx\mtil,\overline{l})- \frac{\Li N}{\varphi(QBdx\mtil)}\right| + 2|DA\Pi_{Z,D,A}|.
\end{align*}
Here we used that $|D_1|\leq |2D\Pi_{Z,D,A}|$.\par

We pause a moment to analyse the leading constant
$\sum_{\varrho_1\mid D_1} \frac{\mu(\varrho_1)}{\varphi(QBd\varrho\varrho_1\mtil)}$. For a positive integer $E$ we define
$$\varphi_E(n)=\frac{\varphi(En)}{\varphi(E)}= n\prod_{\substack{p\mid n\\ p\nmid E}}\left(1-\frac{1}{p}\right)$$
and observe that $\varphi_E(n)$ is a multiplicative function. Let $E= QBd\mtil$. Then for $\gcd(\varrho,2D\Pi_{Z,D,A})=1$ and $\varrho_1\mid D_1$ we have $\gcd(\varrho,\varrho_1)=1$ and
\begin{align*}
\sum_{\varrho_1\mid D_1} \frac{\mu(\varrho_1)}{\varphi(QBd\varrho\varrho_1\mtil)} &= \sum_{\varrho_1\mid D_1} \frac{\mu(\varrho_1)}{\varphi(QBd\mtil) \varphi_{QBd\mtil}(\varrho\varrho_1)} \\&= \frac{1}{\varphi(QBd\mtil) \varphi_{QBd\mtil}(\varrho)}\prod_{p\mid D_1} \left(1-\frac{1}{\varphi_{QBd\mtil}(p)}\right)\\
&=\frac{1}{\varphi(QBd\mtil) \varphi_{QBd\mtil}(\varrho)}\prod_{\substack{p\mid D_1\\p\nmid Bd\mtil}} \left(1-\frac{1}{p-1}\right) \prod_{\substack{p\mid D_1\\p\mid Bd\mtil}} \left(1-\frac{1}{p}\right)\\
&=\frac{1}{\varphi(QBd\mtil) \varphi_{QBd\mtil}(\varrho)}\prod_{\substack{p\mid D_1\\p\nmid Bd\mtil}} \left(1-\frac{1}{(p-1)^2}\right) \prod_{p\mid D_1} \left(1-\frac{1}{p}\right).
\end{align*}

\subsection{Applying the sieve}
The corresponding part of \cite{Iwaniec72shiftedprimes} is Section 4; \cite[Lemmata 4.1-4.3]{Iwaniec72shiftedprimes} are general results which we use in the exact same form.

We define the set $\mathscr{P}$ to be the set of primes not dividing $2DA\Pi_{Z,D,A}$ such that $\left(\frac{k(D)}{p}\right)=-1$.
Let $E=QBd\mtil$. Then we have
\begin{equation}
\left| \sum_{\substack{p\leq x\\ p\in \mathscr{P}}} \frac{\log p}{\varphi_E(p)}- \frac{1}{2}\log x\right| < C_1,\quad x\geq 1,
\end{equation}
for a constant $C_1$ which may depend on $A,D,Z,\mtil$, but does not depend on $B$. We define 
$$Y=\prod_{p\mid D_1}\left(1-\frac{1}{p}\right) \prod_{\substack{p\mid D_1\\ p\nmid Bd\mtil}} \left(1-\frac{1}{(p-1)^2}\right) \Li N$$

Note that $Y$ depends on $D_1$ and $D_1$ depends on $\Pi_{Z,D,A}$. We define $y$ as
$$y=\sqrt{N}/|QBdD\Pi_{Z,D,A}\mtil| (\log N)^U,$$
and assume $|QBd\Pi_{Z,D,A}| < (\log N)^{15}$, where $U$ is given as in Theorem 4.1 in \cite{Iwaniec72shiftedprimes}.\par
For $1\leq s \leq 3$ and $N$ sufficiently large we can hence apply Theorem 5 in \cite{Iwaniec72shiftedprimes} and we obtain the lower bound
\begin{align*}
A(\mathscr{M},y^{1/s}) \geq \frac{\sqrt{2e^{\gamma}\pi^{-1}}C_0}{\varphi(QBd\mtil)} \prod_{p\mid D_1}\left(1-\frac{1}{p}\right) \prod_{\substack{p\mid D_1\\ p\nmid Bd\mtil}} \left(1-\frac{1}{(p-1)^2}\right)\\ \times \frac{N}{(\log N)^{3/2}} \left(\int_1^s \frac{dt}{\sqrt{t(t-1)}}+o(1)\right) +O(N(\log N)^{-20}).
\end{align*}
Here the implicit constants are independent of $B$. Moreover, the constant $C_0$ is given by
$$C_0= e^{-\gamma/2} \prod_{\substack{p\nmid 2DBA\Pi_{Z,D,A}\mtil\\ \left(\frac{k(D)}{p}\right)=-1}} \left(1-\frac{1}{(p-1)^2}\right) \prod_{p\mid 2DA\Pi_{Z,D,A}} \left(1-\frac{1}{p}\right)^{-1/2} \prod_{p\nmid 2DA\Pi_{Z,D,A}} \left(1-\frac{1}{p}\right)^{-\frac{1}{2}\left(\frac{k(D)}{p}\right)}$$
As in Iwaniec's paper \cite{Iwaniec72shiftedprimes} following his Corollary to Theorem 4, we observe that for $1<s<\frac{4}{3}$ and $N$ sufficiently large we have
\begin{equation}\label{splittingA}
A(\mathscr{M},y^{1/s}) = \sum_{\substack{m\in \mathscr{M}\\ q\mid m\Rightarrow q\in P}}1 + \sum_{\substack{p_1p_2m\in \mathscr{M}\\ q\mid m \Rightarrow q\in P\\ y^{1/s}\leq p_1,p_2\in \mathscr{P}}}1.
\end{equation}

An inspection of the proof of Lemma 4.4 in \cite{Iwaniec72shiftedprimes} shows that we have the following lemma.

\begin{lemma}
Assume that $|QBdD\Pi_{Z,D,A}\mtil| < (\log N)^{15}$, $15<U$ and $s>1$. Then we have
$$\sum_{\substack{p_1p_2m\in \mathscr{M}\\ q\mid m \Rightarrow q\in P\\ y^{1/s}\leq p_1,p_2\in \mathscr{P}}}1 < \frac{4e^{\gamma/2}C_0\sqrt{s-1}}{\sqrt{\pi}\varphi(QBd\mtil)\sqrt{s}} \log (2s-1) \frac{4s^2N}{(\log N)^{3/2}} \prod_{p\mid D_1}\left(1-\frac{1}{p}\right) \prod_{\substack{p\mid D_1\\ p\nmid Bd}}\left(1-\frac{1}{(p-1)^2}\right)(1+o(1)),$$
where the implicit constant is independent of $B$.
\end{lemma}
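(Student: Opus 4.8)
The plan is to follow the proof of Lemma~4.4 of \cite{Iwaniec72shiftedprimes}, carrying the additional congruence modulus $\mtil$ and the coprimality to $\Pi_{Z,D,A}$ throughout. The quantity in question counts those $m\in\mathscr{M}$ whose $\mathscr{P}$-part consists of exactly two primes $p_1,p_2\geq y^{1/s}$, the remaining prime factors all lying in $P$. By the decomposition \eqref{splittingA}, together with the size bound $m\leq N/(Bd)$ and the choice $y\asymp\sqrt N$, an element of $\mathscr{M}$ surviving the sieve $A(\mathscr{M},y^{1/s})$ can carry at most two such large inert primes when $1<s<\tfrac43$ (since three would force a product exceeding $y^{3/s}>N$), so this is precisely the term to be estimated.

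First I would detach the two large primes. For fixed $p_1,p_2\in\mathscr{P}$ with $y^{1/s}\leq p_1,p_2$, the inner count of admissible $m'$ with $p_1p_2m'\in\mathscr{M}$ and every prime factor of $m'$ in $P$ is itself a half-dimensional sifting problem on $\{m': p_1p_2m'\in\mathscr{M}\}$, where $m'\leq N/(Bdp_1p_2)$. I would bound it from above by sifting the inert primes out of $m'$ up to a suitable level and applying the upper-bound form of the half-dimensional sieve (cf.\ Theorem~4 of \cite{Iwaniec72shiftedprimes}), using the level of distribution established in Section~\ref{sec:dist}; the hypotheses $|QBdD\Pi_{Z,D,A}\mtil|<(\log N)^{15}$ and $15<U$ guarantee that, even after the level is reduced by the factor $p_1p_2$, one stays within the admissible range. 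This yields an upper bound of the shape
\[
\frac{C_0}{\varphi(QBd\mtil)}\,\frac{N}{p_1p_2\,(\log N)^{1/2}}\prod_{p\mid D_1}\left(1-\frac1p\right)\prod_{\substack{p\mid D_1\\ p\nmid Bd}}\left(1-\frac{1}{(p-1)^2}\right),
\]
uniformly in $B$, with the same singular-series constant $C_0$ and local factors that appear in the main term.

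Next I would sum over the two primes, over the range $y^{1/s}\leq p_1,p_2$ with $p_1p_2\leq N/(Bd)$, evaluating the double sum with the half-dimensional prime estimate $\sum_{p\leq x,\,p\in\mathscr{P}}\frac{\log p}{\varphi_E(p)}=\tfrac12\log x+O(1)$ recorded just above. Writing $u_i=\log p_i/\log y$, the constraints become $u_i\geq 1/s$ and $u_1+u_2\leq 2$, and the density $\tfrac12$ of $\mathscr{P}$ converts the double sum into the integral producing the factor $\log(2s-1)$ together with the $s$-dependent constants $4s^2\sqrt{s-1}/\sqrt s$ (note that as $s\to1^+$ the admissible region collapses and $\log(2s-1)\to0$, which is exactly the vanishing needed for the main term to dominate). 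Each summation over $p_i$ contributes a factor $(\log N)^{-1/2}$, which combines with the $(\log N)^{-1/2}$ from the inner sieve to give the overall $(\log N)^{-3/2}$; collecting the constants reproduces the stated bound.

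The main obstacle will be bookkeeping rather than any new idea: I must verify that the extra factor $\mtil$ in the modulus and the coprimality to $\Pi_{Z,D,A}$ alter only the constant $C_0$ and the factor $\varphi(QBd\mtil)$ and not the structure of the sieve, that all error terms are absorbed by the distribution estimate of Section~\ref{sec:dist}, and---most importantly---that every implied constant remains independent of $B$, since the eventual application replaces $B$ by $4B$. Matching the precise numerical constant of Iwaniec's Lemma~4.4 while carrying these extra factors is the delicate point, but it becomes mechanical once the half-dimensional prime-counting input and the upper-bound sieve are in place.
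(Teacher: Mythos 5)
Your proposal takes the same route as the paper, whose entire proof of this lemma is the remark that Iwaniec's proof of Lemma~4.4 goes through once the extra modulus $\mtil$, the coprimality to $\Pi_{Z,D,A}$, and the local factors over $p\mid D_1$ are carried along --- which is exactly the plan you lay out, down to the source of the $\log(2s-1)$ constant. One bookkeeping slip worth fixing when you write it up: the inner count for fixed $p_1,p_2$ is of size $\frac{C_0N}{\varphi(QBd\mtil)\,p_1p_2\,(\log N)^{3/2}}$ rather than with $(\log N)^{1/2}$ in the denominator, since the base set $\left\{m':p_1p_2m'\in\mathscr{M}\right\}$ already has cardinality about $\operatorname{Li}(N)/(\varphi(QBd\mtil)\,p_1p_2)$ owing to the primality of $p$ built into $\mathscr{M}$, and the half-dimensional sieve supplies only the remaining $(\log N)^{-1/2}$; correspondingly the sums over $p_1,p_2$ of $1/p_i$ contribute only the $s$-dependent constant (via the density-$\tfrac12$ Mertens estimate), not factors of $(\log N)^{-1/2}$ each --- the two misattributions cancel, so the stated bound is unaffected.
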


\begin{remark}
In Lemma 4.4 in \cite{Iwaniec72shiftedprimes} there is a typo in the statement of the lemma, the factors $\prod_{p\mid D_1} \left(1-\frac{1}{p}\right) \prod_{\substack{p\mid D_1\\ p\nmid Bd}} \left(1-\frac{1}{(p-1)^2}\right)$ are missing on the right hand side of the inequality. 
\end{remark}

By taking $s$ sufficiently close to $1$ in equation \eqref{splittingA} we obtain the following lemma.

\begin{lemma}\label{lemlowerboundM}
Assume that $|QBdD\Pi_{Z,D,A}\mtil| < (\log N)^{15}$. Then we have
$$\sum_{\substack{m\in \mathscr{M}\\ q\mid m\Rightarrow q\in P}}1\gg C_0\prod_{p\leq Z} \left(1-\frac{1}{p}\right)\frac{1}{\varphi(B)}\frac{N}{(\log N)^{3/2}} - O(N(\log N)^{-20}),$$
where the implicit constants are independent of $B$.
\end{lemma}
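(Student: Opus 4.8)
The plan is to read the target sum directly off the sieve decomposition \eqref{splittingA} and then play the lower bound for $A(\mathscr{M},y^{1/s})$ against the upper bound for the two-prime remainder just established, optimizing in $s$. For any $1<s<\tfrac43$, \eqref{splittingA} rearranges to
$$
\sum_{\substack{m\in \mathscr{M}\\ q\mid m\Rightarrow q\in P}}1 = A(\mathscr{M},y^{1/s}) - \sum_{\substack{p_1p_2m\in \mathscr{M}\\ q\mid m \Rightarrow q\in P\\ y^{1/s}\leq p_1,p_2\in \mathscr{P}}}1,
$$
so inserting the displayed lower bound for $A(\mathscr{M},y^{1/s})$ and subtracting the upper bound from the preceding lemma produces a lower bound of the shape $W\cdot G(s)\cdot\frac{N}{(\log N)^{3/2}}(1+o(1)) + O(N(\log N)^{-20})$, where $W=\frac{C_0}{\varphi(QBd\mtil)}\prod_{p\mid D_1}(1-\frac1p)\prod_{p\mid D_1,\,p\nmid Bd}(1-\frac{1}{(p-1)^2})$ is a common prefactor and $G(s)$ is the difference of the two $s$-dependent constants.

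First I would verify that $W$ really is common to both estimates. The remainder bound carries $\prod_{p\mid D_1,\,p\nmid Bd}$ while the lower bound for $A$ carries $\prod_{p\mid D_1,\,p\nmid Bd\mtil}$; but $D_1\mid 2D\Pi_{Z,D,A}$ is coprime to $\mtil$ (since $\gcd(\mtil,2D)=1$ and $\Pi_{Z,D,A}$ omits the primes dividing $\mtil$), so the conditions $p\nmid Bd$ and $p\nmid Bd\mtil$ coincide on divisors of $D_1$ and the two products are identical. Thus $W$ factors out cleanly, leaving
$$
G(s)=\sqrt{\tfrac{2e^{\gamma}}{\pi}}\int_1^s\frac{dt}{\sqrt{t(t-1)}}-\frac{16 e^{\gamma/2}}{\sqrt{\pi}}\,s^{3/2}\sqrt{s-1}\,\log(2s-1).
$$
The crux is to show $G(s)>0$ for $s$ near $1$. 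Using $\int_1^s\frac{dt}{\sqrt{t(t-1)}}=2\log(\sqrt{s}+\sqrt{s-1})\sim 2\sqrt{s-1}$ and $\log(2s-1)\sim 2(s-1)$ as $s\to 1^+$, the main term is $\sim 2\sqrt{2e^{\gamma}/\pi}\,\sqrt{s-1}$ while the subtracted term is only $\sim \tfrac{32e^{\gamma/2}}{\sqrt{\pi}}(s-1)^{3/2}$; the former dominates, so $G(s)\gg\sqrt{s-1}>0$ for all $s$ sufficiently close to $1$. I fix one such $s=s_0\in(1,\tfrac43)$, so that $G(s_0)$ is a fixed positive constant (independent of $B,N,Z$) that is absorbed into the $\gg$.

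It then remains to simplify $W$ into the claimed shape with a constant independent of $B$. For $\prod_{p\mid D_1}(1-\frac1p)$ I would isolate the primes coming from $\Pi_{Z,D,A}$: the ratio $\prod_{p\mid D_1}(1-\frac1p)/\prod_{p\le Z}(1-\frac1p)$ is a product over the finitely many primes dividing $2DA\mtil$, bounded independently of both $Z$ and $B$, so $\prod_{p\mid D_1}(1-\frac1p)\gg_{D,A,\mtil}\prod_{p\le Z}(1-\frac1p)$. The factor $\prod_{p\mid D_1,\,p\nmid Bd}(1-\frac{1}{(p-1)^2})$ is then a positive absolute constant; here one must check that $p=2$ never contributes a vanishing factor, which holds because in \eqref{eqndecompS1} we have $2\mid ABd$, so either $2\mid Bd$ (and $2$ is excluded by the condition $p\nmid Bd$) or $2\mid A$, in which case $2\mid QA$ forces $2\nmid D_1$. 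Finally, since $\gcd(\mtil,QBd)=1$ one has $\varphi(QBd\mtil)=\varphi(\mtil)\varphi(QBd)\le \varphi(\mtil)\,Qd\,\varphi(B)$, whence $\frac{1}{\varphi(QBd\mtil)}\gg_{Q,d,\mtil}\frac{1}{\varphi(B)}$. Collecting these gives $W\gg C_0\prod_{p\le Z}(1-\frac1p)\frac1{\varphi(B)}$ with a $B$-free constant, and the error $O(N(\log N)^{-20})$ is carried verbatim into the statement.

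The hard part is the positivity of $G(s)$: the whole argument succeeds only because the main sieve term decays like $\sqrt{s-1}$ whereas the two-prime remainder decays faster, like $(s-1)^{3/2}$, so their difference remains positive as $s\to 1^+$ — this is precisely the point at which the half-dimensional sieve yields a genuine lower bound rather than a trivial one. The remaining work, namely extracting $\frac1{\varphi(B)}$ and $\prod_{p\le Z}(1-\frac1p)$ with a constant uniform in $B$ and confirming that $p=2$ does not introduce a zero factor, is delicate bookkeeping but otherwise routine.
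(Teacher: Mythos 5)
Your proposal is correct and follows exactly the route the paper intends: the paper's entire proof is the one-line remark ``by taking $s$ sufficiently close to $1$ in equation \eqref{splittingA},'' and you have supplied precisely the missing details — subtracting the two-prime remainder of Lemma 3.4 from the sieve lower bound for $A(\mathscr{M},y^{1/s})$, noting that the main term decays like $\sqrt{s-1}$ while the remainder decays like $(s-1)^{3/2}$, and then extracting $\varphi(B)^{-1}$ and $\prod_{p\le Z}(1-\tfrac1p)$ with constants uniform in $B$. Your observations that $D_1$ is coprime to $\mtil$ (so the two Euler products agree) and that the condition $2\mid ABd$ prevents the factor $1-\tfrac{1}{(p-1)^2}$ from vanishing at $p=2$ are correct and are exactly the bookkeeping points one must check.
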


As a direct consequence we obtain a lower bound for the counting function $S_1(N,Z)$. The following proposition is a combination of Lemma \ref{lemlowerboundM}, equation \eqref{eqndecompS1} and Lemma \ref{lemlocal}. 

\begin{proposition}
Assume that $|100 ABD^3\Pi_{Z,D,A}\mtil| < (\log N)^{15}$. Moreover, we assume that $2\mid AB$ or $D\not\equiv 5 \mod{8}$.
Then we have
$$S_1(N,Z) \gg C_0\prod_{p\leq Z} \left(1-\frac{1}{p}\right) \frac{1}{\varphi(B)}\frac{N}{(\log N)^{3/2}} - O(N(\log N)^{-20}),$$
where the implicit constants are independent of $B$.
\end{proposition}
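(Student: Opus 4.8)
The plan is to read the Proposition directly off equation \eqref{eqndecompS1} by retaining a single term of the double sum and invoking Lemma~\ref{lemlowerboundM}. Recall that $S_{1,Z}(N,f,B,A)=S_1(N,Z)$, so \eqref{eqndecompS1} presents $S_1(N,Z)$ as a double sum over admissible pairs $(d,L)$ of the nonnegative quantities $\sum_{m\in\mathscr{M},\,q\mid m\Rightarrow q\in P}1$, together with an error of size $O(|2DA|)$. Since every inner sum is $\geq 0$, for a lower bound it suffices to exhibit one admissible pair and estimate its contribution, discarding all other terms.

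First I would fix such a pair. By Lemma~\ref{lemlocal}, applied precisely under the standing hypothesis $2\mid AB$ or $D\not\equiv 5\pmod 8$, there exist a table-admissible $d$ and a residue-admissible $0<L<Q$ with $2\mid ABd$, $\mu_{\setS}^2(d)=1$, $2^5\nmid d$, $3^3\nmid d$, and $\gcd(BdL+A,QBd)=1$. These are exactly the constraints under which $(d,L)$ indexes a summand of \eqref{eqndecompS1}, so fixing this pair isolates one legitimate term. This is the heart of the matter: as the Correction following \eqref{eqndecompS1} and the example $x^2-xy+y^2+1$ illustrate, when $2\nmid AB$ and $D\equiv 5\pmod 8$ no admissible pair need exist, which is exactly why that case is excluded. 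So the parity/discriminant hypothesis is doing genuine work here rather than being cosmetic.

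Next I would transfer the size hypothesis. For the fixed pair I must check that the Proposition's assumption $|100ABD^3\Pi_{Z,D,A}\mtil|<(\log N)^{15}$ implies the hypothesis $|QBdD\Pi_{Z,D,A}\mtil|<(\log N)^{15}$ of Lemma~\ref{lemlowerboundM}. This is a short, finite computation: $Q\mid 8D$, while the admissible $d$ furnished by Lemma~\ref{lemlocal} is supported on primes dividing $2D$ and constrained by $2^5\nmid d$, $3^3\nmid d$ with odd part squarefree away from $3$, so $Qd\ll D^2$ with an absolute implied constant. The factor $100AD^3$ in the hypothesis carries a full extra power of $D$ (and $|A|\geq 1$), and is chosen precisely to dominate $QdD$, so that $|QBdD\Pi_{Z,D,A}\mtil|\leq |100ABD^3\Pi_{Z,D,A}\mtil|$ holds for the selected $d$. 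With the hypothesis of Lemma~\ref{lemlowerboundM} verified, I would apply that lemma to the inner sum of the chosen pair to obtain
$$
\sum_{\substack{m\in\mathscr{M}\\ q\mid m\Rightarrow q\in P}}1
\gg C_0\prod_{p\leq Z}\left(1-\frac{1}{p}\right)\frac{1}{\varphi(B)}\frac{N}{(\log N)^{3/2}}-O\!\left(N(\log N)^{-20}\right),
$$
with implicit constant independent of $B$.

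Finally I would assemble the bound. Combining the displayed estimate with \eqref{eqndecompS1} and discarding the remaining nonnegative terms yields the Proposition, once the error $O(|2DA|)$ from \eqref{eqndecompS1} is absorbed; this is immediate, since $|2DA|<(\log N)^{15}=o\!\left(N(\log N)^{-20}\right)$ for large $N$ under the standing hypothesis. The main obstacle is therefore not the sieve estimate, which is fully packaged into Lemma~\ref{lemlowerboundM}, but rather the guaranteed existence of a suitable admissible pair $(d,L)$ meeting all the coprimality, parity, and near-squarefreeness constraints, i.e.\ Lemma~\ref{lemlocal}; everything else in the proof is bookkeeping designed to feed that single term into the sieve lemma.
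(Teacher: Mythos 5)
Your proposal is correct and follows exactly the route the paper takes: the paper's (one-line) proof is precisely ``combine equation \eqref{eqndecompS1}, Lemma~\ref{lemlocal} (to produce one admissible pair $(d,L)$ with $\mu_{\setS}^2(d)=1$, $2^5\nmid d$, $3^3\nmid d$, and $\gcd(BdL+A,QBd)=1$, which is where the hypothesis $2\mid AB$ or $D\not\equiv 5\pmod 8$ is used), and Lemma~\ref{lemlowerboundM} applied to that single nonnegative summand.'' Your additional bookkeeping --- checking that $|100ABD^3\Pi_{Z,D,A}\mtil|<(\log N)^{15}$ forces the hypothesis of Lemma~\ref{lemlowerboundM} for the chosen $d$, and absorbing the $O(|2DA|)$ error --- is exactly the implicit content of the paper's statement.
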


\subsection{Passing from the genus to one form}

By choosing $Z$ a sufficiently large constant and combining Lemma \ref{lemma:sq} and Lemma \ref{lemma:sigma4} we obtain the following proposition.

\begin{proposition}\label{prop:lowerboundSsq}
Assume that $\gcd(A,B)=1$. Let $\mtil$ be a natural number and $l$ and integer with $\gcd(l,\mtil)=1$ and assume that $\gcd(\mtil,2DB)=1$ and that $\gcd(l-A,\mtil)=1$. Moreover, we assume that $2\mid AB$ or $D\not\equiv 5 \mod{8}$.
Then we have
$$S_{sq}(N) \gg \frac{N}{(\log N)^{3/2}},$$
where the implicit constant is allowed to depend on all involved parameters.
\end{proposition}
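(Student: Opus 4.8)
The plan is to combine the three estimates already in place: the decomposition of Lemma~\ref{lemma:sq}, the upper bound for the $S_2$ terms in Lemma~\ref{lemma:sigma4}, and the lower bound for $S_1(N,Z)$ from the preceding Proposition. First I would fix $Z$ to be a large absolute constant (to be pinned down at the very end) and check that the hypothesis $|100ABD^3\Pi_{Z,D,A}\mtil|<(\log N)^{15}$ of the $S_1$-lower bound is met for all sufficiently large $N$: once $Z$ is a constant and $A,B,D,\mtil$ are fixed, $\Pi_{Z,D,A}$ is a constant, so the left-hand side is bounded and the inequality holds for $N$ large. The standing hypothesis ``$2\mid AB$ or $D\not\equiv 5\pmod 8$'' is inherited verbatim, since it is exactly the condition the $S_1$ bound requires (through Lemma~\ref{lemlocal}), so no new case analysis is needed here.

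Next I would assemble the chain of inequalities. Lemma~\ref{lemma:sq} gives $S_{sq}(N)\geq S_1(N,Z)-\sum_{Z\leq d\ll N^{1/2}}S_2(N,d)-O(1)$; substituting the $S_1$-lower bound and the $S_2$-upper bound of Lemma~\ref{lemma:sigma4} yields
$$S_{sq}(N)\geq \left(C_0\prod_{p\leq Z}\Bigl(1-\frac1p\Bigr)\frac{1}{\varphi(B)}-O\bigl(Z^{-(1-\epsilon)}\bigr)\right)\frac{N}{(\log N)^{3/2}}-O\!\left(\frac{N}{(\log N)^{2}}\right)-O\bigl(N(\log N)^{-20}\bigr)-O(1).$$
The three trailing error terms are each $o\bigl(N(\log N)^{-3/2}\bigr)$, so the entire matter reduces to showing that the bracketed coefficient of $N(\log N)^{-3/2}$ can be made a fixed positive constant by a suitable choice of $Z$.

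The heart of the matter, and the step I expect to require the most care, is the asymptotic comparison of the two $Z$-dependent quantities in that bracket. The subtracted term decays polynomially in $Z$, like $Z^{-(1-\epsilon)}$. The positive term does shrink as $Z\to\infty$, but only polylogarithmically: by Mertens' theorem $\prod_{p\leq Z}(1-1/p)\asymp(\log Z)^{-1}$, while the constant $C_0$ \emph{grows} like $(\log Z)^{1/2}$. The latter is because introducing the coprimality to $\Pi_{Z,D,A}$ moves each prime $p<Z$ with $\gcd(p,2DA\mtil)=1$ from the factor $\prod_{p\nmid 2DA\Pi_{Z,D,A}}(1-1/p)^{-\frac12\left(\frac{k(D)}{p}\right)}$ into the factor $\prod_{p\mid 2DA\Pi_{Z,D,A}}(1-1/p)^{-1/2}$; the net effect is a factor $\prod_{p<Z,\ \left(\frac{k(D)}{p}\right)=-1}(1-1/p)^{-1}\asymp(\log Z)^{1/2}$ (the complementary primes with $\left(\frac{k(D)}{p}\right)=1$ cancel, and the tail over $p\geq Z$ converges since $\sum_p\left(\frac{k(D)}{p}\right)/p$ does). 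Hence the positive coefficient is of order $(\log Z)^{-1/2}/\varphi(B)$. Since $Z^{-(1-\epsilon)}(\log Z)^{1/2}\to 0$ as $Z\to\infty$, the subtracted term is negligible relative to the positive one once $Z$ is a sufficiently large constant; fixing such a $Z$ renders the bracket a fixed positive constant. This gives $S_{sq}(N)\gg N(\log N)^{-3/2}$ for $N$ sufficiently large, with the implicit constant depending on the now-fixed $Z$ and hence on all the parameters, as claimed.
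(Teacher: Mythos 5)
Your proposal is correct and follows the same route as the paper, which proves this proposition simply by combining Lemma~\ref{lemma:sq}, Lemma~\ref{lemma:sigma4}, and the preceding lower bound for $S_1(N,Z)$ with $Z$ a sufficiently large constant. In fact you supply more detail than the paper does at the one genuinely delicate point, namely the verification via Mertens-type estimates that $C_0\prod_{p\le Z}(1-1/p)\asymp(\log Z)^{-1/2}$ dominates the $O(Z^{-(1-\epsilon)})$ loss from the squarefree sieve, and that analysis is accurate.
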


The essential ingredient in passing from representations of $\frac{p-A}{B}$ by some form in the genus of $f$ to representations by $f$ is given by a result of Bourgain and Fuchs on the representation of integers by binary quadratic forms.

The following Theorem is a slight generalization of Theorem 1.2 in \cite{bourgain2012representation} and is proved in the same way.
\begin{theorem}\label{thm:generalizationBourgainFuchs}
Let $f=ax^2+bxy+cy^2$ be a primitive integral binary quadratic form with discriminant $D$ not a perfect square, positive definite if $D<0$, and let $A,B$ be as in Theorem~\ref{thm:maintheorem}. Let $\setS$ be a finite set of primes and $e_0\in \mathbb{N}$. 
Write $N_{bad}(N)$ for the number of primes $p<N$ such that 
\begin{enumerate}
\item $\frac{p-A}{B}$ is an integer with $\mu_{\setS}^2\left(\frac{p-A}{B}\right)=1$,
\item $p^{e_0}\nmid \frac{p-A}{B}$ for all $p\in \setS$, and 
\item $\frac{p-A}{B}$ is primitively represented by some form in the genus of $f$ but not by all forms in the genus of $f$.
\end{enumerate}
Then
$$N_{bad}(N)\ll \frac{N}{(\log N)^{3/2+\delta'}} ,$$
where $\delta'>0$ is a constant.
\end{theorem}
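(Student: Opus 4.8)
The plan is to follow the argument of Bourgain and Fuchs \cite{bourgain2012representation} almost verbatim, isolating its single genus-theoretic input and feeding a set of primes of density strictly below $1/2$ into an upper-bound sieve for shifted primes. The guiding principle is that the extra saving $\delta'$ is precisely a density deficit: an integer represented by the whole genus of $f$ has its prime factors coprime to $2D$ supported on the split primes $P=\{p:\kron{k(D)}{p}=1\}$, of density $1/2$, whereas a ``bad'' integer is forced to have those prime factors supported on a proper subset of $P$ of density at most $1/4$, which costs an additional power of $\log N$ in the sieve. To begin, I would strip off the part of $n=\tfrac{p-A}{B}$ supported at primes dividing $2D$ or lying in $\setS$: writing $n=dm$ as in \eqref{eq:dm}, the hypotheses $\mu_\setS^2(n)=1$ and $p^{e_0}\nmid n$ for $p\in\setS$ bound every exponent occurring there, so this ``impure'' factor takes only $O_{D,\setS,e_0}(1)$ values, and it suffices to bound, for each fixed such factor, the number of primes $p\le N$ for which the squarefree part $m$ of $n$, coprime to $2D$ and to $\setS$, is bad. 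For such $m$, Theorem~\ref{thm:I3} reduces representability by the genus $R_f$ to a condition on the ideal classes above $m$.

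Second comes the genus-theoretic dichotomy. Each prime $p_i\mid m$ splits in the order of discriminant $D$, say $(p_i)=\gp_i\bar{\gp_i}$, and the classes in $\mathrm{Cl}(D)$ representing $m$ are precisely the signed products $\prod_i[\gp_i]^{\varepsilon_i}$ with $\varepsilon_i\in\{\pm1\}$. These all lie in the single coset $[f]\,\mathrm{Cl}(D)^2$, the genus of $f$ (here $\mathrm{Cl}(D)^2$ is the principal genus), and $m$ is represented by \emph{every} form of the genus exactly when these signed products exhaust the coset. Flipping the sign $\varepsilon_i$ multiplies the product by $[\gp_i]^{-2}\in\mathrm{Cl}(D)^2$, so the set of signed products is a translate of the set of subproducts of $\{[\gp_i]^2\}$. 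By the covering lemma of \cite{bourgain2012representation} there is a constant $k_0=k_0(|\mathrm{Cl}(D)|)$ such that, once the classes $\{[\gp_i]^2\}$ generate $\mathrm{Cl}(D)^2$ and $m$ has at least $k_0$ prime factors, these subproducts fill all of $\mathrm{Cl}(D)^2$; then $m$ is represented by every form of the genus and is not bad. Hence a bad $m$ either has fewer than $k_0$ prime factors, or has \emph{all} its prime factors $p$ satisfying $[\gp]^2\in M$ for a single maximal proper subgroup $M\subsetneq\mathrm{Cl}(D)^2$.

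Third is the counting. For a maximal proper subgroup $M\subsetneq\mathrm{Cl}(D)^2$ put $P_M=\{p:\ p\ \text{split},\ [\gp]^2\in M\}$. By Chebotarev for the ring class field, $[\gp]$ equidistributes over $\mathrm{Cl}(D)$ as $p$ runs over split primes, so $[\gp]^2$ equidistributes over $\mathrm{Cl}(D)^2$ and $P_M$ has density $\tfrac12\cdot|M|/|\mathrm{Cl}(D)^2|\le\tfrac14$ in the primes. For each of the boundedly many such $M$, I would bound the number of primes $p\le N$ for which $\tfrac{p-A}{B}$ has all its prime factors in $P_M$ by a Selberg upper-bound sieve, with level of distribution supplied by Bombieri--Vinogradov; sieving out the complementary primes, of density $\ge 3/4$, yields a saving $\prod_{p<\sqrt N,\,p\notin P_M}(1-\tfrac1p)\ll(\log N)^{-3/4}$ and hence a bound $\ll N/(\log N)^{2-1/4}=N/(\log N)^{3/2+1/4}$. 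The complementary contribution of bad $m$ with fewer than $k_0$ prime factors is a count of shifted primes that are $k_0$-almost-primes, which is $\ll_{k_0}N(\log\log N)^{k_0}/(\log N)^{2}\ll N/(\log N)^{3/2+1/4}$ for $N$ large. Summing over the $O(1)$ impure factors and the finitely many $M$ gives $N_{bad}(N)\ll N/(\log N)^{3/2+\delta'}$ with $\delta'=\tfrac14$.

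The main obstacle is the genus-theoretic step rather than the sieve: one must make the covering statement quantitative (the constant $k_0$, and the reduction confining a bad $m$ to a single maximal subgroup up to boundedly many exceptional prime factors), and one must treat the primes dividing $2D$ and those in $\setS$, where $n$ need not be squarefree and where the local conditions of Tables~\ref{table1}--\ref{table2} intervene. The sieve step, by contrast, is routine: it is an upper-bound sieve of dimension at most $1/4$ applied to the shifted-prime sequence, for which Bombieri--Vinogradov provides an ample level of distribution. Finally, one should note the degenerate case $\mathrm{Cl}(D)^2=\{1\}$, in which the genus is a single class, ``represented by the genus'' and ``represented by all forms'' coincide, and $N_{bad}(N)=O(1)$; the argument above is needed only when $\mathrm{Cl}(D)^2\neq\{1\}$, where a maximal proper subgroup exists and the density deficit is genuine.
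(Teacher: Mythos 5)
Your proposal takes essentially the same route as the paper, which proves the theorem simply by invoking the argument of Theorem~1.2 of \cite{bourgain2012representation} --- the sumset dichotomy of its Theorem~2.1 applied to $\{[\gp_i]^2\}$ in the square of the class group, followed by an upper-bound sieve on shifted primes whose factors are confined to a set of density at most $1/4$ --- and by observing that the bounded $\setS$-part of $\frac{p-A}{B}$ contributes only $O_{D,\setS,e_0}(1)$ choices of impure factor. The one caveat is that the dichotomy in your third paragraph is stated slightly too cleanly (the covering lemma permits boundedly many exceptional prime factors outside the maximal subgroup, plus a few other degenerate configurations), but you flag exactly this in your closing paragraph and it does not affect the exponent $\delta'$.
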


We provide a few remarks on the above generalization. Theorem~1.2 in \cite{bourgain2012representation} concerns squarefree shifted primes less than $N$ represented by all forms of the genus of a quadratic form of discriminant $D$, where $D$ is allowed to be a power of $\log N$. Since our forms in question have fixed discriminant that does not grow with $N$, the statement above can in fact be improved with $3/2$ replaced by $2$.  The key is that at some point in the proof of Theorem~1.2 in \cite{bourgain2012representation} one has to multiply by the class number which is roughly $\sqrt{D}$, and this affects the power of $\log N$ in the denominator above, but would not occur in our situation. Nonetheless, the result above is good enough for us, so we state it with the $(\log N)^{3/2+\delta}$.

As for the loosening of the squarefree condition to allow for bounded powers of primes from a fixed set of bad primes, we do not include the proof because it is, on the one hand, straightforward, but on the other hand does require re-writing most of the steps in \cite{bourgain2012representation}. To give an idea of what happens in these steps, the key input in \cite{bourgain2012representation} is Theorem~2.1 in that paper, which is a result in combinatorial group theory that has nothing to do with square free versus the slightly more general numbers we consider here. Theorem~2.1 there states that, given a finite abelian group $G$ and a subset $A$ of $G$, the sum set $s(A):=\{\sum x_i\;|\; x_i\in A \mbox{ distinct}\}$ is either the whole group $G$, or $A$ satisfies some specific unfortunate criteria (e.g. it is close to a subgroup of $G$). This result is then applied to the quadratic forms question by letting $G=\mathcal C^2$ where $\mathcal C$ is the class group of primitive positive definite binary forms of discriminant $D$. Given a squarefree number $n$ (which may be taken to be a shifted prime), the set $A$ is the set $\{C_j^2\}$ where $C_j$ and $C_j^{-1}$ are the classes representing the $j$-th prime factor of $n$. The task is then to give an upper bound on the number of $n<N$ for which $A$ falls into the unfortunate cases of Theorem~2.1 of \cite{bourgain2012representation}, because if $s(A)=G$ one has exactly that $n$ is represented by all forms of the genus if it is represented by one. 

Injecting multiples of square free numbers by bounded powers of a fixed set of primes into the analysis behind these upper bound changes them by a constant multiple depending on these bad primes and the powers they are allowed to appear with. This is much more straightforward than attempting to simply remove the squarefree condition with no limitations, in which case the analysis as it stands in \cite{bourgain2012representation} would not neatly generalize.

Theorem \ref{thm:maintheorem} is now a direct consequence of Proposition \ref{prop:lowerboundSsq} and Theorem \ref{thm:generalizationBourgainFuchs}.

\section{Local conditions}
\label{sec:local}
Here, we tackle a key issue in the computations in Section~\ref{sec:half-dimensional-sieve}: the existence of a table-admissible integer $d$ and an appropriate $L$ as described in Iwaniec's tables. Indeed, as we show below, in certain cases this is not true.

\begin{lemma}\label{lemlocal}
Assume that $(A,B)=1$. Then there exist values $d,L$ such that:
\begin{enumerate}
    \item $d$ is table-admissible and $L$ is residue-admissible for $R_f$ in the language of Section~\ref{sec:representability};
    \item $\mu^2(d)=1$;
    \item $2\mid BAd$; and
    \item $(BdL+A,QBd)=1$,
\end{enumerate}
unless we are in one of the following cases:
\begin{itemize}
\item $2\nmid AB$ and $D \equiv 5 \mod{8}$
\item $2\nmid AB$ and $\vartheta_2\geq 4$
\item $2\nmid AB$ and $\vartheta_2 =2$ and $D_2\equiv 1 \mod {4}$.
\item $3\mid D$, $3\nmid A$, $3\mid Ba+A$ and $\vartheta_3>1$.
\end{itemize}

Moreover, if we remove the squarefree condition $\mu^2(d)=1$, then the only exceptional case is $2\nmid AB$ and $D \equiv 5 \mod{8}$.  Barring this exceptional case, we can choose such a tuple such that $d$ is of the form $d=2^{\varepsilon_2}3^{\varepsilon_3}d'$ with $\mu^2(d')=1$, $\gcd(d',6)=1$ and $\varepsilon_2\in\{0,1,2,3,4\}$ and $\varepsilon_3\in\{0,1,2\}$.
\end{lemma}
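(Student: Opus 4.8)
The plan is to verify conditions (1)--(4) one prime at a time and then recombine by the Chinese Remainder Theorem. Since $d$ is supported on primes dividing $2D$ and $Q\mid 8D$, the prime divisors of $QBd$ are exactly those of $2DB$. For a prime $q\mid B$ one has $BdL+A\equiv A\pmod q$, so $\gcd(A,B)=1$ makes (4) automatic there; for $q\mid d$ one likewise gets $BdL+A\equiv A\pmod q$, so (4) at such $q$ is equivalent to $q\nmid A$, i.e. $\gcd(A,d)=1$. The real work is at the primes $q\mid 2D$: for each I must pick the exponent $\varepsilon_q$ of $q$ in $d$ so that some row of Tables~\ref{table1}--\ref{table2} is valid (this is table-admissibility, together with $q\nmid A$ whenever $\varepsilon_q\ge 1$), and pick a residue of $L$ modulo $\tau_q$ in the relevant set $\mathscr{L}$ avoiding the single ``bad'' class imposed by $q\nmid BdL+A$ when $q\nmid Bd$. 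As the $\tau_q$ are pairwise coprime prime powers with $Q=\prod_q\tau_q$, any consistent family of local choices assembles into a global pair $(d,L)$; thus the lemma reduces to local solvability at each $q\mid 2D$, and the exceptional list should be exactly the primes at which no admissible local choice exists.

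For an odd prime $p=p_i\ge 5$ dividing $D$ I would always take $\varepsilon_p=0$, landing in row~(\hyperref[row1]{1}) ($0<\vartheta_p$, no condition on $D$), with $\tau=p$ and $|\mathscr{L}'_p|=(p-1)/2\ge 2$. If $p\nmid B$ the coprimality $p\nmid BdL+A$ excludes one residue, which we avoid because $|\mathscr{L}'_p|\ge 2$ and $0\notin\mathscr{L}'_p$; if $p\mid B$ then $BdL+A\equiv A\pmod p$ is already a unit. So primes $p\ge 5$ never obstruct and never force $d$ to be non-squarefree. The prime $2$ is the principal source of exceptions. When $2\mid AB$ one takes $\varepsilon_2=0$: some row among~(\hyperref[row7]{7})--(\hyperref[row11]{11}) is valid for every admissible $\vartheta_2$ (recall $\vartheta_2=1$ does not occur for a discriminant), and coprimality at $2$ is either automatic (when $2\mid B$, since then $BdL+A$ is odd) or arranged by taking the odd element $ad_2\in\mathscr{L}$. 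When $2\nmid AB$, condition~(3) forces $\varepsilon_2\ge 1$; but then $BdL+A$ is automatically odd, so only table-admissibility matters. Scanning the rows with $\varepsilon_2\ge 1$ shows that the squarefree choice $\varepsilon_2=1$ is available exactly when $\vartheta_2=0$ and $D\equiv 1\pmod 8$ (row~(\hyperref[row12]{12})), when $\vartheta_2=2$ and $D_2\equiv 3\pmod 4$ (row~(\hyperref[row18]{18})), or when $\vartheta_2=3$ (row~(\hyperref[row17]{17})); it fails precisely in the three listed cases $D\equiv 5\pmod 8$, $\vartheta_2\ge 4$, and $\vartheta_2=2$ with $D_2\equiv 1\pmod 4$. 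In the case $D\equiv 5\pmod 8$ no row with $\varepsilon_2\ge 1$ exists at all, whereas in the other two an exponent $\varepsilon_2\in\{2,3\}$ does work (rows~(\hyperref[row13]{13})--(\hyperref[row16]{16}) for $\vartheta_2\ge 4$, rows~(\hyperref[row19]{19})/(\hyperref[row20]{20}) for $\vartheta_2=2$), which is why these two disappear once the squarefree constraint is dropped.

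For $p=3\mid D$ the set $\mathscr{L}'_3$ is a singleton, which is the delicate point. With $\varepsilon_3=0$ (row~(\hyperref[row1]{1})) one is forced to $L\equiv ad\pmod 3$, and since $d^2\equiv 1\pmod 3$ the obstruction $3\mid BdL+A$ reduces to $3\mid aB+A$, independently of the rest of $d$; if $3\mid A$ the bad class is $0\notin\mathscr{L}'_3$, so there is no obstruction. Hence the only troublesome sub-case is $3\nmid A$ and $3\mid Ba+A$. There I would raise $\varepsilon_3$: if $\vartheta_3=1$ then $\varepsilon_3=1$ sits in row~(\hyperref[row4]{4}), makes $3\mid d$ so that $BdL+A\equiv A$ is a unit, and is still squarefree; but if $\vartheta_3>1$ the only odd exponent $\varepsilon_3=1$ has no valid row (row~(\hyperref[row1]{1}) needs $2\mid\varepsilon_3$), so no squarefree $d$ exists, which is the fourth exceptional case. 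The even exponent $\varepsilon_3=2$ repairs it: for $\vartheta_3=2$ exactly one of rows~(\hyperref[row2]{2})/(\hyperref[row3]{3}) holds according to $D_3\bmod 3$, while for $\vartheta_3\ge 3$ row~(\hyperref[row1]{1}) applies, and in every case $3\mid d$ removes the obstruction. Assembling the local choices by CRT yields $(d,L)$ with $d=2^{\varepsilon_2}3^{\varepsilon_3}d'$, $\gcd(d',6)=1$, $\mu^2(d')=1$, $\varepsilon_2\in\{0,1,2,3,4\}$ and $\varepsilon_3\in\{0,1,2\}$, giving both the squarefree statement and the relaxed ``moreover''. I expect the row-by-row bookkeeping at $p=2$ to be the main obstacle, as one must check that for every admissible $\vartheta_2$ the parity and size constraints of rows~(\hyperref[row12]{12})--(\hyperref[row20]{20}) admit an exponent $\varepsilon_2\ge 1$ of the required size, and verify that the three failure patterns are exhaustive.
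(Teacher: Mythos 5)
Your proposal is correct and follows essentially the same route as the paper's proof: reduce to independent local choices at each prime of $2DB$ via CRT, dispose of odd primes $p\ge 5$ using $|\mathscr{L}'_p|=(p-1)/2\ge 2$, and then do the same case analysis at $2$ and $3$ (including the forced $\varepsilon_2\ge1$ when $2\nmid AB$ and the forced $\varepsilon_3\ge1$ when $3\nmid A$, $3\mid Ba+A$), arriving at the identical list of exceptional cases and the same non-squarefree repairs via rows (\hyperref[row13]{13})--(\hyperref[row20]{20}) and $\varepsilon_3=2$.
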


\begin{proof}[Proof of Lemma \ref{lemlocal}]
We will choose $d$ to be composed only of powers of $2$ and powers of $3$. In particular, in the case $\mu^2(d)=1$ we will choose $d\in \{1,2,3,6\}$.

Note that if $q$ is a prime with $q\mid B$, then $q\nmid A$ and $(BdL+A,q)=1$ independent of the choice of $d,L$.\par
We consider now primes $p_i>3$ which divide $D$. Assume that $d$ is not divisible by $p_i$. Table~\ref{table1} shows that $\tau(\varepsilon_{p_i},\vartheta_{p_i})=p_i$ and that there are $\frac{p_i-1}{2}>1$ admissible residue classes of $L$ modulo $p_i$. We want that $(BdL+A,p_i)=1$. Note that the congruence $BdL+A\equiv 0 \mod{p_i}$ rules out zero congruence classes if $p_i\mid B$ (as then $p_i\nmid A$) and it rules out exactly one congruence class for $L$ if $p_i\nmid B$. We choose out of the $\frac{p_i-1}{2}$ admissible congruence classes for $L$ modulo $p_i$ one which has the property that $(BdL+A,p_i)=1$. This works for all values of $d$ which are not divisible by $p_i$.

It remains to consider the primes $2$ and $3$. We make a case distinction.

{\bf Considerations for the place $p_i=2$.}

\textit{Case 1:} We assume that $2\mid A$ (which implies $2\nmid B$). In this case we choose $d$ to be odd, i.e., $\varepsilon_2=0$. Then we certainly have $2\mid ABd$.

Next we consider Table~\ref{table2} for the prime $2$. If $\vartheta_2=0$, then $D\equiv 1 \mod 4$ as we always have $D \equiv 0,1\mod{4}$. In this case we have $\tau(\varepsilon_2,\vartheta_2)=1$ and $2\nmid Q$. If $\vartheta_2\geq 1$, then we automatically have $\vartheta_2\geq 2$. In the case $\vartheta_2=2$ and $D_2\equiv -1 \mod{4}$ we have $\tau(\varepsilon_2,\vartheta_2)=4$ and choose $L$ according to the table. Note that in this case $L$ is automatically odd. If $\vartheta_2=2$ and $D_2\equiv 1 \mod{4}$ we have $\tau(\varepsilon_2,\vartheta_2)=1$ and $Q$ is odd. In the case that $\vartheta_2\geq 3$, we have $\tau(\varepsilon_2,\vartheta_2)=4$ or $\tau(\varepsilon_2,\vartheta_2)=8$ and choose $L$ modulo $4$ or modulo $8$ according to the tables. Note that also in this case $L$ is automatically odd. To summarise our findings for the prime $2$, we observe that we can always find $L$ admissible at the prime $2$, and either $Q$ is odd or $L$ is odd. If $Q$ is even, then $(BdL+A,2)=(L,2)=1$ and if $Q$ is odd, then $QBd$ is odd.

\textit{Case 2:} We assume $2\mid B$. In this case again we choose $d$ odd and find some admissible $L$ as in Case 1. Note that now we automatically have $(BdL+A,2)=1$ independent of the choices for $d,L$.

\textit{Case 3:} $2\nmid AB$. In this case we have to choose $d$ even, i.e. $\varepsilon_2\geq 1$. In case we find an admissible $d,L$ with $d$ even, we automatically have $(BdL+A,2)=1$, i.e. we only need to decide if some admissible value exists. We can now read off the existence of an admissible tuple $d,L$ at the place $2$ from the Table~\ref{table2}.

In particular, if $\vartheta_2=0$ (i.e. $D$ is odd), then such a tuple exists if and only if $D \equiv 1 \mod 8$.

In the case $\vartheta_2>0$ and $\vartheta_2$ even, an admissible tuple $d,L$ always exists. Indeed in this case we can use one of the rows (\hyperref[row18]{18}), (\hyperref[row19]{19}), or (\hyperref[row20]{20}). 

If $\vartheta_2$ is odd, then we always have $\vartheta_2\geq 3$, and we can use row (\hyperref[row17]{17}).

If we want to add in the additional condition that $\mu^2(d)=1$, then we can only choose $\varepsilon_2 \in \{0,1\}$. This is no problem for Case 1 and Case 2 as were we may use $\varepsilon_2=0$. However, in Case 3 we must be in the case $\varepsilon_2=1$. An inspection of Table~\ref{table2} shows that this is only possible if we are in the case $\vartheta_2=2$ and $D_2\equiv -1 \mod{4}$ or if we are in the case $\vartheta_2=3$.

In the case that $2\nmid AB$, $\vartheta_2=2$ and $D_2\equiv 1 \mod{4}$, we choose $\varepsilon_2\in\{2,3\}$ depending on $D_2$ modulo $8$ as described in rows (\hyperref[row19]{19}) and (\hyperref[row20]{20}).

In the case $2\nmid AB$ and $\vartheta_2\geq 4$, we choose $\varepsilon_2$ as follows: If $4\leq \vartheta_2\leq 6$, then we use line (15) and (16) and choose $\varepsilon_2=\vartheta_2-3$ or $\varepsilon_2=\vartheta_2-2$ depending on the parity of $\varepsilon_2$. If $\vartheta_2\geq 7$, then we choose $\varepsilon_2=2$ as in line (\hyperref[row13]{13}).

{\bf Considerations for the place $p_i=3$.}

This is only relevant if $3\mid D$ as otherwise $3\nmid Qd$. We recall that if $3\mid D$ then $3\nmid a$.

\textit{Case 1:} $3\mid A$. In this case we have $3\nmid B$ and if we choose $d$ not divisible by $3$, then we have $(BdL+A,3)=1$. Such a choice is always possible by Table~\ref{table1}.

\textit{Case 2:} $3\mid B$. In this case $3\nmid A$ and $(BdL+A,3)=1$ for every admissible tuple $d,L$. The existence again follows from Table~\ref{table1}.  In particular, we can choose $\varepsilon_3=0$.

\textit{Case 3:} $3\nmid AB$ and $aB+A\not\equiv 0\mod{3}$: 
In this case we may choose $\varepsilon_3=0$, i.e. $d$ not divisible by $3$. Then we have $\tau(\varepsilon_{p_i},\vartheta_{p_i})=p_i=3$ and there is exactly one admissible congruence class for $L$ modulo $3$, which is the congruence class $L\equiv ad \mod{3}$ (see row (\hyperref[row1]{1}) in the table). In this case we have
$$(BdL+A,3)=(Bad^2+A,3)=(Ba+A,3)=1.$$

\textit{Case 4:} $3\nmid AB$ and $3\mid aB+A$. We have learned from Case 3, that if $\varepsilon_3=0$, then we must have $L\equiv ad \mod{3}$ and hence $(BdL+A,3)=(Bad^2+A,3)=(Ba+A,3)=3$ which is not allowed. Hence in this case we must have $\varepsilon_3>0$. If we have $\varepsilon_3>0$, i.e., $3\mid d$, then we automatically have $(BdL+A,3)=(A,3)=1$. I.e. the question reduces to when an admissible tuple $d,L$ at the place $3$ exists. If we are allowed to choose $\varepsilon_3$ freely, then Table~\ref{table1} shows that some admissible tuple always exists (for example by setting $\varepsilon_3=\vartheta_3$ in rows (\hyperref[row2]{2}), (\hyperref[row3]{3}) or (\hyperref[row4]{4})). However, if we additionally want to achieve $\mu^2(d)=1$, then the only possibility for $\varepsilon_3$ is $\varepsilon_3=1$. Then Table~\ref{table1} shows that the only case in which there is an admissible choice for $L$ is in the case $\vartheta_3=\varepsilon_3=1$.

For the more general case, we again assume for a moment that $3\nmid AB$, $3\mid Ba+A$ and $\vartheta_3 > 1$. If $\vartheta_3=2$, then we choose $\varepsilon_3=2$, which is admissible by rows (\hyperref[row2]{2}) and (\hyperref[row3]{3}). If $\vartheta_3\geq 3$, then we choose $\varepsilon_3=2$ which is admissible by row (\hyperref[row1]{1}).
\end{proof}

\newpage

\bibliographystyle{alpha}
\bibliography{refs}

\end{document}